\definecolor{webgreen}{rgb}{0,.5,0}
\definecolor{webbrown}{rgb}{.6,0,0}
\newcommand{\seqnum}[1]{\href{https://oeis.org/#1}{\rm \underline{#1}}}
\DeclareMathOperator{\per}{\textsc{Per}}
\DeclareMathOperator{\pal}{\textsc{Pal}}
\DeclareMathOperator{\paltwo}{\textsc{Paltwo}}
\DeclareMathOperator{\faceq}{\textsc{Faceq}}
\DeclareMathOperator{\novel}{\textsc{Novel}}
\DeclareMathOperator{\ispp}{\textsc{Ispp}}
\DeclareMathOperator{\countpp}{\textsc{Countpp}}
\DeclareMathOperator{\BWT}{BWT}
\DeclareMathOperator{\pp}{pp}
\def\andd{\, \wedge \, }
\newenvironment{smallarray}[1]
{\null\,\vcenter\bgroup\scriptsize
\arraycolsep=.13885em
\hbox\bgroup$\array{@{}#1@{}}}
{\endarray$\egroup\egroup\,\null}
\def\Sigmastar{\Sigma^*}
\begin{document}

\theoremstyle{plain}
\newtheorem{theorem}{Theorem}
\newtheorem{corollary}[theorem]{Corollary}
\newtheorem{lemma}[theorem]{Lemma}
\newtheorem{proposition}[theorem]{Proposition}

\theoremstyle{definition}
\newtheorem{definition}[theorem]{Definition}
\newtheorem{example}[theorem]{Example}
\newtheorem{conjecture}[theorem]{Conjecture}

\theoremstyle{remark}
\newtheorem{remark}[theorem]{Remark}

\title{Some Remarks on Palindromic Periodicities}

\author{Gabriele Fici\\
 Dipartimento di Matematica e Informatica\\ Universit\`a degli Studi di Palermo\\ 90133 Palermo \\
 Italy\\
\href{mailto:gabriele.fici@unipa.it}{\tt gabriele.fici@unipa.it}\\
\and
Jeffrey Shallit\\
School of Computer Science\\
University of Waterloo\\
Waterloo, ON  N2L 3G1\\
Canada\\
\href{mailto:shallit@uwaterloo.ca}{\tt shallit@uwaterloo.ca}\\
\and Jamie Simpson\\
130 Preston Point Rd, East \\
Fremantle, WA 6158\\
Australia\\
\href{mailto:jamiesimpson320@gmail.com}{\tt jamiesimpson320@gmail.com}}

\maketitle

\begin{abstract}
We say a finite word $x$ is a {\it palindromic periodicity} 
if there exist two palindromes $p$ and $s$ such
that $|x| \geq |ps|$ and
$x$ is a prefix of the word $(ps)^\omega = pspsps\cdots$.  In this paper we examine the palindromic periodicities occurring in some classical infinite words, such as Sturmian words, episturmian words, the Thue--Morse word, the period-doubling word, the Rudin--Shapiro word, the paperfolding word, and the Tribonacci word, and prove a number of results about them.   We also prove results about words with the smallest number of palindromic periodicities.
\end{abstract}

\section{Introduction}

Recently, the third author introduced the notion of {\it palindromic periodicity}
\cite{Simpson:2024}.   A finite word $x$ is called  a palindromic periodicity
if there exist two palindromes $p$ and $s$ such
that $|x| \geq |ps|$ and
$x$ is a prefix of the word $(ps)^\omega = pspsps\cdots$.
He proved a number of interesting results about these words.
As an example, $x = 121344312134$ is a palindromic periodicity, as can be
seen by taking $p = 121$, $s =3443$.   Note that in this example, $x$ itself is not the product
of two palindromes.  

Products of two palindromes, also called \emph{symmetric words} were studied, for example, by Kemp \cite{Kemp:1982}, Brlek et al.~\cite{Brlek&Hamel&Nivat&Reutenauer:2004}, Guo et al.~\cite{Guo&Shallit&Shur:2015}, and Borchert and Rampersad \cite{Borchert&Rampersad:2015}. Particular cases of symmetric words are the words that have a perfectly clustered Burrows--Wheeler transform. Recall that the \emph{Burrows--Wheeler Transform} (BWT) of a word $w$ is the word obtained by concatenating the last letters of the conjugates of $w$ sorted in lexicographic order. For example, if $w=0120$, the list of sorted conjugates of $w$ is $\{0012,0120,1200,2001\}$, so the BWT of $w$ is $2001$. By definition, the BWT of $w$ is the same as the BWT of any conjugate of $w$.
A word over the ordered alphabet $\Sigma=\{0,1,\ldots ,k-1\}$ is said to have \textit{perfectly clustered BWT} if $\BWT(w)$ belongs to $(k-1)^*\cdots 1^*0^*$.
It is known that a binary word has perfectly clustered BWT if and only if it is a (power of) a standard Sturmian word~\cite{Mantaci&Restivo&Sciortino:2003}.
Over the ternary alphabet $\{0,1,2\}$, words with perfectly clustered BWT are related to the three-distance theorem~\cite{Berthe&Reutenauer:2024}.
In \cite{Simpson&Puglisi:2008}, Simon Puglisi and the third author proved that if a ternary word has perfectly clustered BWT, then it is symmetric. This result was extended to alphabets of any size by Restivo and Rosone~\cite{Rosone&Restivo:2009}. In particular, then, every word that has a perfectly clustered BWT is a palindromic periodicity. 


\bigskip 

In this note, we prove a number of new results about the concept of palindromic periodicity.  For example, we prove that every factor of a Sturmian word is a palindromic periodicity. This result can be extended to the larger class of trapezoidal (or stiff) words, that are finite words with at most $n+1$ distinct factors of length $n$ for any $n$, and to all prefixes (but not all factors) of standard episturmian words.

Using the free software tool {\tt Walnut}
\cite{Mousavi:2016,Shallit:2023},
we examine the palindromic periodicities occurring in
certain classic infinite words, such as the Thue--Morse word, Rudin--Shapiro
word, period-doubling word, paperfolding word, Fibonacci word, and Tribonacci
word.

\section{General remarks}

We say that a nonempty word $z$ is a {\it word-period\/} of a word $w$ if $w$ is a prefix of $z^\omega =zzz\cdots$. The shortest word-period of a word is called its {\it fractional root}.   For example,
{\tt ent} is a word-period of
the French word {\tt entente} and also its fractional root. The length of a word-period of a word $w$ is called {\it a period\/} of $w$. We call the length of the fractional root of $w$ {\it the period\/} of $w$.

A word $v$ is a  {\it border\/} of a word $w$ if $v$ is both a prefix and a suffix of $w$. A word $w$ is unbordered if it does not have non-trivial borders (i.e., its only borders are the empty word $\varepsilon$ and the word $w$ itself). A word is unbordered if and only if it coincides with its fractional root.

The reverse of a finite word $w$ is denoted $w^R$.  If $w = w^R$, then $w$ is a {\it palindrome}.

Two words are {\it conjugate} if one is a cyclic shift of the other.
A word is called \textit{symmetric\/} if it is the concatenation of two palindromes; or, equivalently, if it is a conjugate of its reverse~\cite{Brlek&Hamel&Nivat&Reutenauer:2004}. A palindrome is a particular case of a symmetric word, in which one of the two words is the empty word.

Note that every symmetric word is a palindromic periodicity. Actually, it follows from the definition that a word $w$ is a palindromic periodicity if and only if $w$ has a symmetric word-period. In particular, if the fractional root of $w$ is symmetric, then $w$ is a  palindromic periodicity. However, $0100110$ is a palindromic periodicity (since it is symmetric), yet its fractional root $010011$ is not symmetric, so the converse does not hold in general.

\begin{remark}
    A word $w$ is symmetric if and only if $w^n$ is symmetric for any $n>1$. Indeed, if $w=uv$, with $u,v$ palindromes, then $w^n=(uv)^{n-1}uv$, and $(uv)^{n-1}u$ is a palindrome.

    An analogous property does not hold for palindromic periodicities. For example, $010110$ is a palindromic periodicity, but its square $010110010110$ is not.
\end{remark}

By the previous remark, we can suppose that the symmetric word-period of a nonempty palindromic periodicity is primitive. Since a word $w$ cannot have two distinct primitive word-periods both shorter than half the length of $w$ (as a consequence of  the Fine and Wilf theorem \cite{Fine&Wilf:1965}), it follows that if the fractional root of a palindromic periodicity $w$ is not symmetric, then $w$ has a symmetric word-period that is longer than half the length of $w$.

\section{Results for certain classes of words}

In this section, we prove that for certain classes of words the fractional root is always symmetric. As we saw in the previous section, this implies in particular that these words are palindromic periodicities.

A contiguous block in a word $w$ is called a {\it factor\/} of $w$.
Recall that an infinite binary word is called {\it Sturmian\/}
if it has exactly $n+1$ distinct factors of length $n$, for all $n \geq 0$.  A finite binary word is called Sturmian if it is a factor of an infinite Sturmian word.

A finite Sturmian word $u$ is a \textit{central word} if $0u0$, $0u1$, $1u0$ and $1u1$ are all Sturmian words. Central words are palindromes.

A finite Sturmian word is \textit{standard} if it is of the form $u01$ or $u10$, where $u$ is a central word. In particular, the words  $u01$ and $u10$ are conjugate~\cite{Luca&Mignosi:1994} and therefore a standard word is a conjugate of its reversal, i.e., it is symmetric.

We prove the following result, originally conjectured by the second author.
\begin{theorem}\label{thm:sturm}
Every nonempty factor of a Sturmian word is a palindromic periodicity.
\end{theorem}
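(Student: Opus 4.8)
The plan is to reduce the statement to a question about the fractional root, and then to identify that root as a conjugate of a standard word. Let $w$ be a nonempty factor of a Sturmian word and let $z$ be its fractional root, so that $w$ is a prefix of $z^{\omega}$ and $|w| \geq |z|$. By the discussion preceding the theorem, it suffices to show that $z$ is symmetric, since in that case the symmetric word $z$ is a word-period of $w$ of length at most $|w|$, making $w$ a palindromic periodicity. Note first that $z$ is primitive and that, being the length-$|z|$ prefix of $w$, it is itself a factor of the ambient Sturmian word; hence $z$ is a (primitive) finite Sturmian word, i.e.\ a balanced word. Thus the entire theorem reduces to the single claim that \emph{the fractional root of a finite Sturmian word is symmetric}.

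To prove this claim I would show that $z$ is a conjugate of a standard Sturmian word. The route is to argue that the periodic word $z^{\omega}$ is balanced (circularly balanced): since $w$ is a balanced prefix of $z^{\omega}$ of length at least $|z|$, every sufficiently short factor of $z^{\omega}$ already occurs in $w$, and for a periodic word balance on short factors propagates to all lengths. By the classical characterization of primitive circularly balanced words, $z^{\omega}$ balanced forces $z$ to be a conjugate of a standard word. One should keep in mind that standard words have the form $u01$ or $u10$ with $u$ a central word (hence a palindrome), and that they are symmetric, as recorded in the excerpt.

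It then remains to transport symmetry from the standard representative to $z$ itself, which is clean: symmetry is invariant under conjugacy. Indeed, if $z = \alpha\beta$ and $z' = \beta\alpha$ is a conjugate of $z$, then $(z')^{R} = \alpha^{R}\beta^{R}$ is a conjugate of $z^{R} = \beta^{R}\alpha^{R}$; so reversal carries the conjugacy class of $z$ onto that of $z^{R}$. Consequently, if one word in a conjugacy class is symmetric (equivalently, conjugate to its own reverse), then so is every word in that class. Applying this to the standard word conjugate of $z$ yields $z \sim z^{R}$, i.e.\ $z$ is symmetric, which completes the reduction and hence the proof.

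The hard part is the circular-balance step, namely establishing that the minimal-period root of a finite Sturmian word is a conjugate of a standard word. The delicate case is when $w$ is short relative to its period — for instance when $w$ is unbordered, so that $z = w$ and one knows only that $z$ itself (not $z^{2}$) is balanced; here the propagation argument above is not immediately available, and I would instead invoke the structural classification of balanced words, or extend $w$ to a longer balanced prefix of $z^{\omega}$ before applying it. I expect the main care to go into this boundary analysis, since a primitive balanced word need not be symmetric in general (e.g.\ $010010100$), and it is precisely the circular-balance property of the \emph{root} that rules out such examples.
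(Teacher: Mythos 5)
Your overall architecture coincides with the paper's: reduce the theorem to showing that the fractional root $z$ of a finite Sturmian word is symmetric, identify $z$ as a conjugate of a standard word, and combine the fact that standard words are conjugates of their reversals with the (correct, and in the paper left implicit) observation that symmetry is invariant under conjugacy --- your verification of that last step is fine. The difference is that where the paper simply cites the characterization of de Luca and De Luca (a finite word is Sturmian if and only if its fractional root is a conjugate of a standard word), you attempt to derive this key lemma from scratch via circular balance, and that derivation has a genuine gap.

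The gap is exactly the case you yourself flag. Your propagation argument needs every circular factor of $z$ of each length up to $|z|$ to occur inside $w$, which requires $|w| \geq 2|z|-1$; in the regime $|z| \leq |w| < 2|z|-1$ neither of your proposed patches works as stated. Extending $w$ to a longer balanced prefix of $z^{\omega}$ is circular: you have no a priori guarantee that prefixes of $z^{\omega}$ longer than $w$ remain balanced, and that guarantee is equivalent to the balance of $z^{\omega}$, which is the conclusion being sought. (Extending $w$ inside the ambient Sturmian word instead does preserve balance but can change the fractional root --- e.g., $010$ has root $01$ while its Fibonacci-word extension $0100$ has root $010$ --- so that variant does not rescue the argument either.) Your fallback of invoking ``the structural classification of balanced words'' is an unspecified appeal that, once made precise, is essentially the de Luca--De Luca theorem itself, i.e., the citation the paper uses. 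Note also that the short regime is precisely where the lemma has content: by the Fine--Wilf remark in Section 2, when $|w| \geq 2|z|$ the root is strongly constrained, whereas unbordered balanced words (where $z=w$ and only balance of $z$ itself, not of $z^{2}$, is known) are the heart of the matter. So as written, your proposal is the paper's proof with its crucial lemma replaced by an incomplete argument; it becomes complete, and then identical in substance to the paper's two-line proof, once you cite the fractional-root characterization of finite Sturmian words (or the classification of purely periodic balanced sequences) rather than trying to prove it by propagation.
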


\begin{proof}
A finite word is Sturmian if and only if its fractional root is a conjugate of a standard Sturmian word \cite{Luca&DeLuca:2006a} and every standard Sturmian word is a conjugate of its reversal~\cite{Luca&Mignosi:1994}. Therefore, every finite Sturmian word has a symmetric fractional root, and hence it is a palindromic periodicity.
\end{proof}

Notice that not every palindromic periodicity is a factor of a Sturmian word, with $0011$ being a smallest example.

A finite word is \textit{trapezoidal}~\cite{Luca:1999} (or \textit{stiff}~\cite{Heinis:2001}) if it has at most $n+1$ distinct factors of length $n$ for every $n$. Notice that trapezoidal words are binary words by definition. Finite Sturmian words are trapezoidal, but there are trapezoidal words that are not Sturmian (e.g., $0011$).  

Recall that a binary word $w$ is not Sturmian if and only if there exists a word $u$ such that both $0u0$ and $1u1$ are factors of $w$~\cite{Coven&Hedlund:1973}. The pair $(0u0,1u1)$ is called a \textit{pathological pair}. Moreover, it is possible to prove that the word $u$ in a pathological pair of minimal length is a central word~\cite{Bucci&DeLuca&Fici:2013}.
The structure of non-Sturmian trapezoidal words was described by D'Alessandro in~\cite{DAlessandro:2002}: Let $w$ be a binary word that is not Sturmian. Then $w$ is trapezoidal if and only if $w=pq$, where the fractional roots of $p^R$ and $q$ are $z_{0u0}$ and $z_{1u1}$. Moreover, $p$ and $q$ are Sturmian words.

\begin{example}
 The word $w=00001010$ is trapezoidal but not Sturmian. Its pathological pair of minimal length is $(000,101)$, so that $z_{000}=0$ and $z_{101}=10$. The word $w$ can be written as $w=pq$, with $p=0000$ and $q=1010$, where $z_{p^R}=z_{000}$ and $z_q=z_{101}$. 
\end{example}

\begin{theorem}\label{thm:trap}
  Every trapezoidal word has a symmetric fractional root; hence it is a palindromic periodicity.
\end{theorem}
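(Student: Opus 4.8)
The plan is to prove the first assertion---that the fractional root of a trapezoidal word $w$ is symmetric---since, as noted in Section 2, having a symmetric fractional root immediately makes $w$ a palindromic periodicity. I would distinguish two cases according to whether $w$ is Sturmian. If $w$ is Sturmian, then by the characterization used in the proof of Theorem~\ref{thm:sturm} its fractional root is a conjugate of a standard Sturmian word, and standard words are symmetric; so there is nothing more to do. Hence assume $w$ is not Sturmian, let $(0u0,1u1)$ be its pathological pair of minimal length (so that $u$ is central), and apply D'Alessandro's structure theorem to write $w=pq$ with $p^R$ a prefix of $a^\omega$ and $q$ a prefix of $b^\omega$, where $a=z_{0u0}$ and $b=z_{1u1}$.

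First I would isolate an auxiliary lemma: the fractional root of any palindrome is symmetric. If $v$ is a palindrome with smallest period $P$, then its length-$P$ suffix is a cyclic rotation of its length-$P$ prefix $z$ (because $v$ has period $P$), while at the same time that suffix equals $z^R$ (because $v=v^R$). Thus $z^R$ is a conjugate of $z$, i.e.\ $z$ is symmetric. Applying this to the palindromes $0u0$ and $1u1$ shows that both $a$ and $b$ are symmetric.

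Then I would identify the fractional root of $w$ itself. The word $w$ carries period $|a|$ on its prefix $p$ and period $|b|$ on its suffix $q$, and these two periodicities overlap precisely on the occurrence of the central word $u$ across the junction; there both periods hold, but since $\gcd(|a|,|b|)=1$ and $|a|+|b|=|u|+2$ the Fine and Wilf theorem does not force a shorter common period, which is exactly why $w$ fails to be Sturmian. One checks that the smallest period $P$ of $w$ satisfies $P>|p|$, so the fractional root is $z=pq'$, where $q'$ is the prefix of $q$ of length $P-|p|$; in particular $z$ is $(a^R)$-periodic on the left and $b$-periodic on the right. It then remains to factor this $z$ into two palindromes. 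Concretely, I would show that $z$ equals its longest palindromic prefix $\alpha$ followed by a suffix $\beta$ that is again a palindrome: the palindromicity of $\alpha$ comes from the symmetry of $a$ together with the occurrence of the palindrome $0u0$ at the end of $p$, and the palindromicity of $\beta$ comes from the symmetry of $b$.

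The hard part will be this last step, namely controlling the interface between the two periodic regimes tightly enough to pin down $\alpha$ and to verify that the complementary factor $\beta$ is indeed a palindrome. I expect the argument to split into subcases according to how $|p|$ and $P-|p|$ sit relative to the block lengths $|a|$ and $|b|$ (equivalently, how far the two periodicities extend past the central word $u$), with each subcase reduced to the symmetry of $a$, the symmetry of $b$, and the palindromicity of $u$ and of the standard words $u01$ and $u10$. Carrying out this bookkeeping uniformly, rather than case by case, is the main obstacle I anticipate.
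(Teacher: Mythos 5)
Your overall frame matches the paper's: reduce to the non-Sturmian case via Theorem~\ref{thm:sturm}, invoke D'Alessandro's decomposition $w=pq$, and aim to show the fractional root splits into two palindromes. Your auxiliary lemma is correct and nicely proved: the fractional root of a palindrome is symmetric, since the length-$P$ suffix is simultaneously a conjugate of the root $z$ and equal to $z^R$. But this lemma only gives you symmetry of the two \emph{local} roots $a=z_{0u0}$ and $b=z_{1u1}$, and that is genuinely too weak to conclude anything about the \emph{global} root of $w$: two words with symmetric roots glued together need not have a symmetric root (indeed the language of palindromic periodicities is not even closed under concatenation, as the paper's remark about $010110$ versus its square shows in spirit). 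Everything you defer as ``the hard part'' --- pinning down the fractional root of $w$ and factoring it into two palindromes --- is in fact the entire mathematical content of the theorem, and symmetry of $a$ and $b$ does not drive it. What the paper uses instead is the fine structure of central words (Propositions~\ref{prop:Pprime} and~\ref{prop:radix}): writing $u=PxyQ=QyxP$ with $P,Q$ central, the roots of $xux$ and $yuy$ are $xQy$ and $yPx$, and an induction over the right- and left-extensions of the minimal word $xuxyuy$ (or $yuyxux$) shows that whenever the root changes, the new root has the form $xu_1x\,yu_2y$ where $u_1,u_2$ are again \emph{central} words (e.g., $u'=QyxPxyQ$, verified central via the identity $uxyQ=Qyxu$ from Proposition~\ref{prop:Pprime}). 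Palindromicity of the two factors thus comes from centrality of $u_1,u_2$, not from $a,b$ being conjugates of their reversals, and without these identities the ``bookkeeping'' you anticipate has no engine to run on.

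Two further concrete problems. First, your choice of splitting point is wrong a priori: you propose $\alpha=$ the longest palindromic prefix of the root $z$, but the correct split is at the $p$--$q$ junction, i.e., $z=xu_1x\cdot yu_2y$; nothing guarantees the longest palindromic prefix coincides with $xu_1x$, nor that its complement is a palindrome, so you would be proving the factorization at the wrong place. Second, you assert ``one checks that $P>|p|$'' and you silently assume $u$ is not a power of a single letter (your appeal to $\gcd(|a|,|b|)=1$ and Proposition~\ref{prop:radix} both need this); the paper treats the degenerate case separately and obtains $P>|p|$ as a byproduct of its inductive description of the roots rather than as a standalone claim. As written, then, your proposal is a correct reduction plus a correct but peripheral lemma, wrapped around an unproven core.
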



Before proving Theorem~\ref{thm:trap}, we need to recall some preliminary results.


\begin{proposition}\cite{deLuca:1997,DAlessandro:2002}\label{prop:Pprime}
Let $u$ be a  word. Then $u$ is central if and only if $u$ is a power of a single letter or there exist central words $P$ and $Q$, with $P$ a proper prefix of $Q$, such that $u=PxyQ=QyxP$, where $\{x,y\}=\{0,1\}$.  

Moreover, $Q=(Pxy)^\ell P'=P' (yxP)^\ell$, for some $\ell>0$ and $P'$ is a palindrome such that either $P'=Px$ or $P'y$ is a prefix of $P$. 
\end{proposition}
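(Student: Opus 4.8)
The plan is to establish the equivalence through the \emph{extremal Fine--Wilf} description of central words, namely the companion characterization that a word $u$ is central if and only if it is a power of a single letter or it admits two coprime periods $p<q$ with $|u|=p+q-2$ (this is the maximal length compatible with coprime periods $p,q$ before the Fine--Wilf theorem~\cite{Fine&Wilf:1965} would force period $1$; see~\cite{deLuca:1997}). I will also use the standard description of central words as exactly the palindromic prefixes of standard Sturmian words, and the already-noted fact that central words are palindromes. Throughout I assume $u$ is not a power of a single letter, the degenerate case being immediate in both directions.

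For the forward direction I would take the coprime periods $p<q$ with $|u|=p+q-2$ and let $P$ and $Q$ be the prefixes of $u$ of lengths $p-2$ and $q-2$. Writing $n=|u|$, the period-$p$ relation $u_i=u_{i+p}$ for $i\le n-p=q-2$ shows the suffix of $u$ of length $q-2$ equals $Q$; since $u$ is a palindrome that suffix is also $Q^R$, so $Q=Q^R$, and the symmetric argument with period $q$ gives $P=P^R$. As $P$ and $Q$ are palindromic prefixes of the central word $u$, hence of a standard word, they are themselves central; and $|P|<|Q|$ makes $P$ a proper prefix of $Q$. The two letters $u_{p-1}u_p$ just after $P$ must be distinct, since otherwise coprimality of $p$ and $q$ would collapse $u$ to a single-letter power; writing $u_{p-1}=x$, $u_p=y$ with $\{x,y\}=\{0,1\}$, the period-$p$ relation gives $u=PxyQ$, and then $u=u^R=Q^R\,yx\,P^R=QyxP$ by palindromicity of $u,P,Q$.

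For the converse, powers of a single letter are central by inspection, so suppose $u=PxyQ=QyxP$ with $P,Q$ central, $P$ a proper prefix of $Q$, and $x\ne y$. From $u=PxyQ$ and $u^R=Q^R yxP^R=QyxP=u$ we see $u$ is a palindrome, and the two factorizations exhibit $|P|+2$ and $|Q|+2$ as periods of $u$ with $|u|=(|P|+2)+(|Q|+2)-2$. By the characterization it then suffices to prove these two periods are \emph{coprime}, after which $u$ is central. Establishing this coprimality is the crux, and I would prove it simultaneously with the \emph{moreover} clause by induction on $|u|$.

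For the moreover clause, observe that $Q$ is a prefix of $u$, which has period $|Pxy|=|P|+2$; hence $Q$ is a prefix of $(Pxy)^\omega$ and factors uniquely as $Q=(Pxy)^\ell P'$ with $P'$ a prefix of $Pxy$. Palindromicity of $Q$ turns this into $Q=(P')^R(yxP)^\ell$, and comparing the two expressions forces $P'$ to be a palindrome, giving $Q=P'(yxP)^\ell$. The technical heart is the analysis of the remainder $P'$: I would show that $P'$ is again a central word obtained from $P$ and a strictly smaller partner, so that passing from $(P,Q)$ to this smaller central pair is exactly one Euclidean step replacing the coprime pair $(p,q)$ by a smaller coprime pair. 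The inductive hypothesis on this descent then yields both the dichotomy for $P'$ (either $P'=Px$, the boundary case, or $P'y$ a prefix of $P$) and the coprimality needed to close the converse. The main obstacle is precisely controlling this descent --- verifying that the remainder $P'$ is central and that no smaller period can intrude, i.e.\ that coprimality is preserved --- since essentially all the genuine content lives there, the remaining steps being routine consequences of the Fine--Wilf theorem and the palindromicity of central words.
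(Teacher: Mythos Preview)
The paper does not prove Proposition~\ref{prop:Pprime}; it is quoted from \cite{deLuca:1997,DAlessandro:2002} without argument. So there is no ``paper's own proof'' to compare against, and your proposal must be judged on its own.

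Your outline is the standard one and is sound as far as it goes: using the extremal Fine--Wilf characterization of central words, reading off $P$ and $Q$ as the palindromic borders of lengths $p-2$ and $q-2$, and recovering the two factorizations by palindromicity is exactly how the forward direction is done in the literature. Two small points there deserve a line each rather than a wave of the hand. First, the claim $x\neq y$ is not immediate: the clean argument is that the period-$p$ extension of $u$ appends $u[q-1]=y$ while the period-$q$ extension appends $u[p-1]=x$; if $x=y$ then $ux$ has both periods and length $p+q-1$, so Fine--Wilf forces period~$1$, contradicting that $u$ is not unary. Second, your factorization $Q=(Pxy)^\ell P'$ gives $\ell=0$ precisely when $q=p+1$ (then $P'=Q=Px$), so either the stated $\ell>0$ excludes that boundary case or you must treat it separately; you do neither.

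The real issue is the converse and the ``moreover'' clause. You correctly identify that everything reduces to proving $\gcd(|P|+2,|Q|+2)=1$ and to the dichotomy on $P'$, and you propose an inductive Euclidean descent on the pair $(P,Q)\mapsto(P',P)$. But you then write that you ``would show that $P'$ is again a central word obtained from $P$ and a strictly smaller partner'' and that ``essentially all the genuine content lives there''---and stop. That is precisely the step that needs to be \emph{done}, not announced: one must check that $P'$ is central, that $P$ and $P'$ again satisfy the two-sided factorization hypothesis (so the induction applies), and that the boundary case $P'=Px$ terminates the recursion with $\gcd(|P|+2,|P|+3)=1$. None of this is carried out. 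As written, the proposal is a correct plan with the load-bearing lemma left as an exercise; to be a proof it needs the descent executed in full.
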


\begin{proposition}\cite{DAlessandro:2002}\label{prop:radix}
    Let $u=PxyQ=QyxP$, with $|P|<|Q|$, be a central word that is not a power of a single letter. Then the fractional root of $xux$ is $xQy$, and the fractional root of $yuy$ is $yPx$.
\end{proposition}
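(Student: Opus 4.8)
The plan is to establish, for $xux$, that it is a prefix of $(xQy)^\omega$ and that $q := |Q|+2 = |xQy|$ is its \emph{minimal} period; together these say that the fractional root of $xux$ is its length-$q$ prefix, namely $xQy$ (which is then automatically primitive). I treat $xux$ in detail; the companion claim that the fractional root of $yuy$ is $yPx$ is obtained the same way and turns out to be slightly easier.

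The word-period is the easy half. Using the representation $u = QyxP$ I would write $xux = x(QyxP)x = (xQy)(xPx)$; since $Q$ begins with $Pxy$ by Proposition~\ref{prop:Pprime}, it begins with $Px$, so $xPx$ is a prefix of $xQy$, and therefore $xux = (xQy)(xPx)$ is a prefix of $(xQy)^\omega$. For $yuy$ the other representation $u = PxyQ$ gives $yuy = y(PxyQ)y = (yPx)(yQy)$, and the decomposition $Q = (Pxy)^\ell P'$ of Proposition~\ref{prop:Pprime} yields $yQy = (yPx)^\ell\, yP'y$; one then checks in each of the two cases for $P'$ (either $P' = Px$, or $P'y$ a prefix of $P$) that $yP'y$ is a prefix of $(yPx)^\omega$, so $yuy$ is a prefix of $(yPx)^\omega$.

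The main obstacle is minimality, and this is where the structure of central words is essential — note that a primitive word-period need not be the fractional root, so the first half does not suffice on its own. I would argue that any period $\pi' < q$ of $xux$ restricts to a period of its central factor $u$ (which has length $|u| \ge q > \pi'$). Since $u$ is central and not a single-letter power, its borders $Q$ and $P$ give it the two periods $p = |P|+2$ and $q = |Q|+2$, with $p + q = |u| + 2$ and $\gcd(p,q) = 1$ (coprimality follows from Proposition~\ref{prop:Pprime} by induction, and is immediate when $P' = Px$). A Fine--Wilf argument \cite{Fine&Wilf:1965}, pairing $\pi'$ first with the smaller period $p$ and then the outcome with $q$, forces any period of $u$ below $q$ to be a multiple of $p$, since otherwise one would produce a period equal to $\gcd(p,q) = 1$, making $u$ unary. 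Finally, no multiple $cp < q$ is a period of $xux$: as $u$ has period $p$ and $u[p] = u[|P|+2] = y$, periodicity gives $u[cp] = y$, whereas $xux[1] = x \neq y = u[cp] = xux[1+cp]$, so the putative period already fails at the first position. This excludes every period below $q$, so the minimal period of $xux$ is $q$ and its fractional root is $xQy$. For $yuy$ the reasoning is analogous but shorter: $p$ is already the minimal period of $u$ itself (any smaller period would again collapse to $\gcd = 1$), so $yuy$ has no period below $p$, while being a prefix of $(yPx)^\omega$ it has period $p$; hence its fractional root is $yPx$. I expect this minimality step to demand the most care, precisely because it cannot be read off from the word-period alone and must draw on the coprime-period structure of the central word $u$.
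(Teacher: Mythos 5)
The paper gives no proof of Proposition~\ref{prop:radix} at all: it is imported verbatim from D'Alessandro \cite{DAlessandro:2002}, so there is no internal argument to compare yours against, and your proposal must be judged on its own terms. So judged, it is correct. The word-period half is exactly right: $xux=(xQy)(xPx)$ with $Px$ a prefix of $Q$ (since $\ell>0$ in Proposition~\ref{prop:Pprime}), and $yuy=(yPx)^{\ell+1}(yP'y)$ with $yP'y$ a prefix of $(yPx)^\omega$ in both cases for $P'$. The minimality half is also sound, and you correctly isolate the only genuinely delicate point: for $xux$ the candidate periods $cp<q$ cannot be excluded by restricting to the factor $u$ (which really does have period $p$), and your first-letter comparison $xux[1]=x\neq y=u[|P|+2]=u[cp]=xux[1+cp]$ is precisely what kills them; for $yuy$ restriction to $u$ suffices, as you say. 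Three small things are left implicit and deserve a line each. First, the coprimality $\gcd(p,q)=1$ with $p=|P|+2$, $q=|Q|+2$, $p+q=|u|+2$ is the standard arithmetic structure of central words and can simply be cited from de Luca \cite{deLuca:1997} rather than re-derived by induction from Proposition~\ref{prop:Pprime} (your remark that it is immediate when $P'=Px$ is correct, since then $q=(\ell+1)p+1$). Second, the Fine--Wilf \cite{Fine&Wilf:1965} length thresholds do hold but should be checked: pairing a period $\pi'\leq q-1$ of $u$ with $p$ requires $|u|=p+q-2\geq \pi'+p-\gcd(\pi',p)$, i.e., $\pi'-\gcd(\pi',p)\leq q-2$, true since $\gcd\geq 1$; and pairing $d=\gcd(\pi',p)\leq p-1$ with $q$ requires $|u|\geq d+q-\gcd(d,q)=d+q-1$, true since $d\leq p-1$, with $\gcd(d,q)=1$ because $d\mid p$. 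Third, the restriction step uses that a factor inherits every period of the ambient word together with $|u|=p+q-2\geq q>\pi'$, so the inherited period of $u$ is nontrivial. With those lines added your argument is a complete, self-contained replacement for the citation, resting only on Proposition~\ref{prop:Pprime} and Fine--Wilf.
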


\begin{proof}[Proof of Theorem~\ref{thm:trap}]


Let $w$ be a trapezoidal word. Since for Sturmian words the statement is true by Theorem~\ref{thm:sturm}, we can suppose that $w$ is not Sturmian.

By the characterization of non-Sturmian trapezoidal words, $w$ contains a pathological pair of minimal length $(0u0,1u1)$, where $u$ is a central word, and $w$ can be written as $w=pq$, where the fractional roots of $p^R$ and $q$ are $z_{0u0}$ and $z_{1u1}$.

If $u$ is a power of a letter, the statement is readily verified. So suppose that $u$ is not a power of a letter. By Proposition~\ref{prop:radix}, there exist central words $P$ and $Q$, with $P$ a proper prefix of $Q$, such that $u=PxyQ=QyxP$ and $z_{xux}=xQy$ and $z_{yuy}=yPx$, with $\{x,y\}=\{0,1\}$. 

So, either $w$ is a factor of a word in
\[(yQx)^* (yPx)^{*} \]
or a factor of a word in 
\[(xPy)^{*}  (xQy)^*.\]

Suppose first that $w$ is minimal, in the sense that it coincides with the concatenation of its pathological pair of minimal length, i.e., $\{p,q\}=\{xux,yuy\}$, so that $w$ is equal to  $w=xuxyuy$ or to $yuyxux$. Then $w=z_w$ since $w$ is unbordered, and in this case the claim is true. 

Suppose now to extend $w$ on the right into a word with a different root. That is, let $w'=pq'$ be the shortest word that has $w$ as a prefix and has root $z_{w'}\neq z_w$. For every proper prefix of $w'$ the claim is verified since the root has not changed. So we have  $w'=z_{w'}$, and we claim that $w'=xuxyu'y$ or $w'=yuyxu'x$ for a central word $u'$, so that $z_{w'}$ is symmetric and the claim is proved for $w'$. 

Suppose first $w=yuyxux$, so that $w$ and $w'$ are words in $(xPy)^{*}  (xQy)^*$. In this case,  we have $w'=w\cdot yQx$. In fact, by the characterization of non-Sturmian trapezoidal words, $q'$ must have fractional root $xQy$, and indeed $q'=xux\cdot xQy=xQyxPx\cdot yQx= xQyxQyxPx$, and $Px$ is a prefix of $Q$ by Proposition~\ref{prop:Pprime}. Since $yQy$ is a prefix of $p=yQyxPy$, and hence of $w'$, we have that $w\cdot yQx$ is the shortest right extension of $w$ that has a fractional root different from that of $w$. Finally, $q'=xux\cdot yQx=xQyxPxyQx$ and the word $u'=QyxPxyQ$ is a central word by Proposition \ref{prop:Pprime} since it can be written as $uxyQ=Qyxu$. 

Let us now assume $w=xuxyuy$, so that $w$ and $w'$ are words in $(yQx)^{*}  (yPx)^*$. Then in this case,  we have $w'=w\cdot xPy$. In fact,  by the characterization of non-Sturmian trapezoidal words, $q'$ must have fractional root $yPx$, and indeed $q'=yuy\cdot xPy = yPxyQyxPy\cdot yQx= yPxyPxyQy = yPxyPxy(Pxy)^\ell P'y$, and $P'y$ is a prefix of $Pxy$ by Proposition~\ref{prop:Pprime}. Since $xPx$ is a prefix of $p=xPxyQx$, and hence of $w'$, we have that $w\cdot xPy$ is the shortest right extension of $w$ that has a fractional root different from that of $w$. Finally, $q'=yuy\cdot xPy= yPxyQyxPy$ and the word $u'=PxyQyxP$ is a central word by Proposition \ref{prop:Pprime} since it can be written as $Pxyu=uyxP$.

Let us now consider the extensions on the left of $w'$. Again, the extension is unique  by the characterization of non-Sturmian trapezoidal words. But in this case, for each letter we add to the left, the fractional root will change. In particular, if the length of the fractional root does not change, the new fractional root is just a rotation of the previous one, and the claim is therefore proved. So let $w''=p'q'$ be the shortest left extension of $w'$ (i.e., such that $p$ is a suffix of $p'$) that has a  fractional root longer than that of $w'$. We claim that $w''=xu''xyu'y$ for a central word $u''$. 

Suppose first $w''\in (yQx)^*(yPx)^*$. By the characterization of non-Sturmian trapezoidal words, $(p')^R$ must have fractional root $(xQy)^R=yQx$, and indeed
$p'=xQy\cdot p = xQyxux= xQy xPxyQx=xPxyQxyQx$ and $xP$ is a suffix of $Q$ by Proposition \ref{prop:Pprime}. Since $z_{w'}$ ends in $yQy$, we have that $xQyw'$ is the shortest left extension of $w'$ that has a different fractional root. Finally, $z_{w''}=w''=p'q'$ and $p'=xQy\cdot p = xQyxux = xQy xPxyQx=xu''x$ with $u''=Qyxu=uxyQ$, and $u''$ is a central word by Proposition \ref{prop:Pprime}. 

If instead $w''\in (xPy)^*(xQy)^*$, $p'$ must have fractional root $(xPy)^R=yPx$, and indeed $p'=yPx\cdot p = yPx yuy = yPx yPxyQy= yPxyPxy(Pxy)^\ell P'y$, and $P'y$ is a prefix of $Pxy$ by Proposition~\ref{prop:Pprime}. Since $z_{w'}$ ends in $yPy$, we have that $yPxw'$ is the shortest left extension of $w'$ that has a different fractional root. Finally, $z_{w''}=w''=p'q'$ and $p'=yPx\cdot p=yPx yQyxPy=yu''y$ with $u''=Pxyu=uyxP$, and $u''$ is a central word by Proposition \ref{prop:Pprime}. 

Since by D'Alessandro's characterization, every non-Sturmian trapezoidal word is obtained by extending periodically on the right and on the left a word of the form $xuxyuy$ or $yuyxux$, the proof is complete.
\end{proof}

\begin{example}
The word $w=010001000100010010010010010$ is a non-Sturmian trapezoidal word. Its fractional root is $z_w=010001000100010010010010$ (and we have $w=z_w010$).
The pathological pair of minimal length of $w$ is  $(0001000,1001001)=(0u0,1u1)$, where $u=00100$. The fractional root of $0u0$ is $z_{0u0}=0001$, while the fractional root of $1u1$ is $z_{1u1}=100$.

The word $z_w$ is a conjugate of $z'=100010001000100100100100=(1000)^3(100)^4=(z_{0u0}^R)^3(z_{1u1})^4$. The word $z' =1000100010001\cdot 00100100100$ is symmetric, and hence $z_w$ is symmetric too.
\end{example}



The classes of Sturmian, trapezoidal, having symmetric fractional root, and palindromic periodicity form a strict inclusion hierarchy within the set of all finite binary words, as illustrated in Figure~\ref{fig1}.
\begin{figure}[htb]
\begin{center}
    \includegraphics[width=5in]{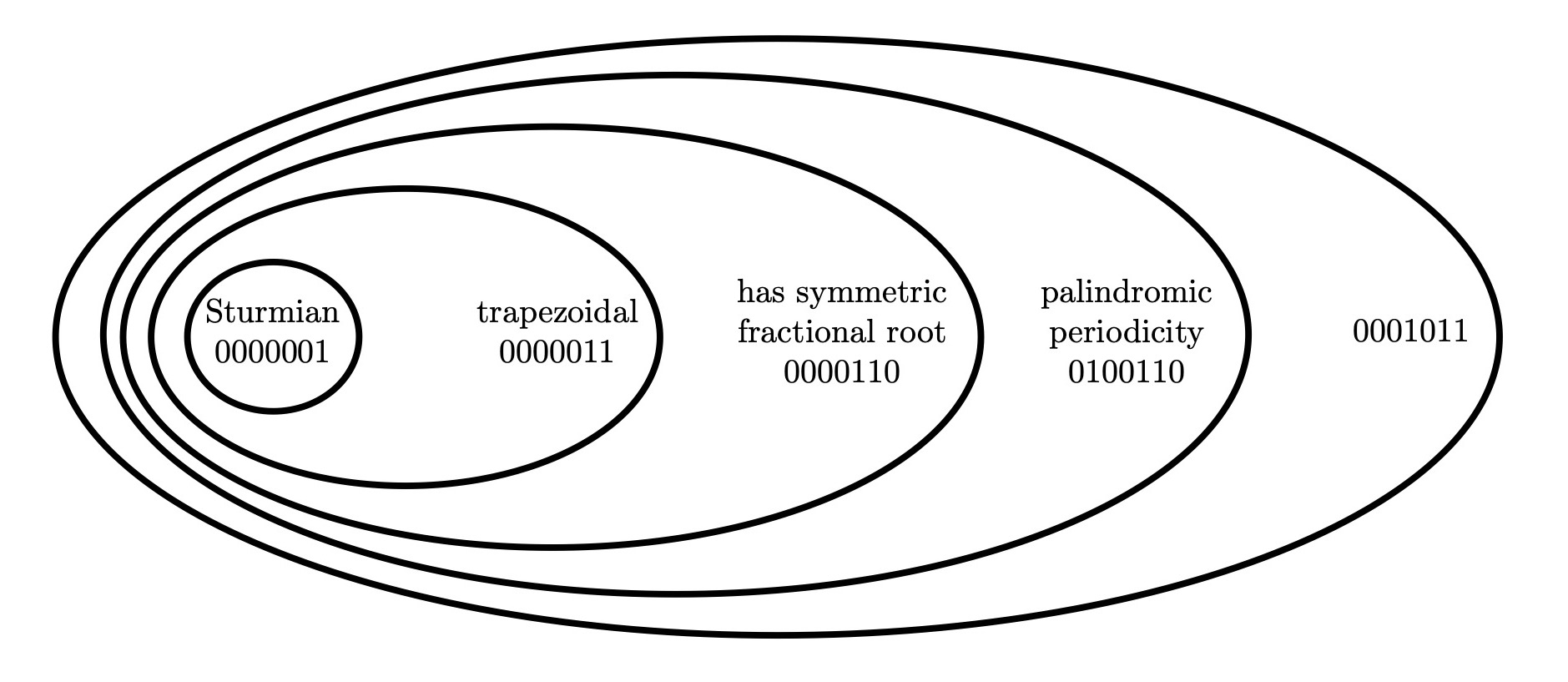}
\end{center}
\caption{The inclusion hierarchy.}
\label{fig1}
\end{figure}

Recall that a word of length $n$ contains at most $n$ nonempty distinct palindromic factors. A word of length $n$ is called \textit{rich} if it contains $n$ distinct palindromic factors \cite{Glen&Justin&Widmer&Zamboni:2009}.

Every trapezoidal word is rich. But there are rich words that are not palindromic periodicities, e.g., $001011$. On the other hand, there are palindromic periodicities that are not rich, a shortest example being $001001101011$. 
This is
illustrated in Figure~\ref{fig2}.
\begin{figure}[htb]
\begin{center}
    \includegraphics[width=4in]{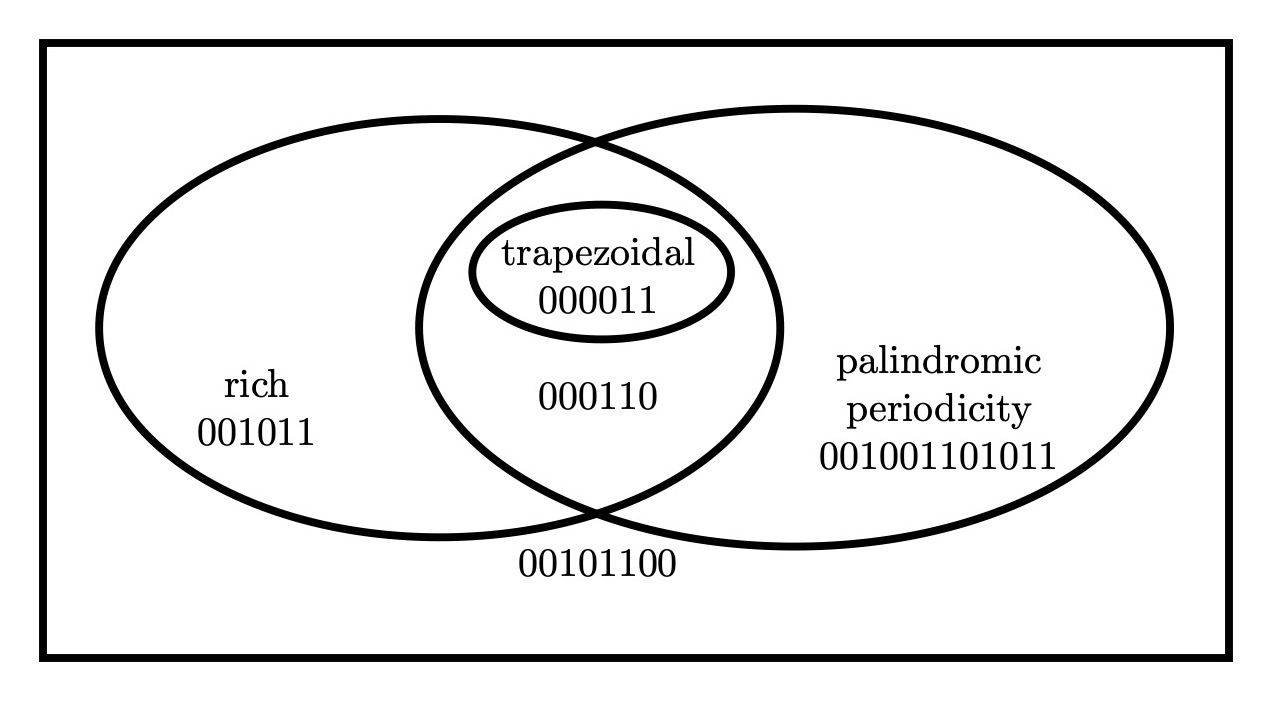}
\end{center}
\caption{Rich words, trapezoidal words, and palindromic periodicities.}
\label{fig2}
\end{figure}

However, if a word is rich and closed, then it is a palindromic periodicity. A word is called \textit{closed}~\cite{Fici:2011} (or \textit{periodic-like}~\cite{Carpi&deLuca:2001}) if it has length $1$ or it has a factor that  appears only as a prefix and as a suffix,  i.e., without internal occurrences. For example, $00$, $0110$, and $01010$ are closed, while $01$, $0010$, and $0101001$ are not.

\begin{proposition}\label{prop:richandclosed}
    If a word is rich and closed, then it is a  palindromic periodicity. 
\end{proposition}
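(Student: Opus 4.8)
The plan is to show that the fractional root of $w$ is symmetric; by the observations in Section~2 this immediately yields that $w$ is a palindromic periodicity. The case $|w|=1$ is trivial, since a single letter is a palindrome, so assume $|w|\ge 2$. Because $w$ is closed, its longest repeated prefix $r$ occurs exactly twice in $w$, once as a prefix and once as a suffix; thus $r$ is a border of $w$, and writing $p=|w|-|r|$ for the associated period and $z$ for the prefix of $w$ of length $p$, the word $w$ is a prefix of $z^\omega$ and $z$ is its fractional root. The goal is to prove that $z$ is symmetric.

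First I would extract a long palindrome from richness. Let $S$ be the longest palindromic suffix of $w$. A standard property of rich words is that $S$ is \emph{unioccurrent}, i.e.\ it occurs exactly once in $w$. I claim $|S|>|r|$. Indeed, if $|S|\le|r|$, then $S$, being a suffix of $w$ no longer than $r$, is a suffix of $r$; but $r$ is also a prefix of $w$, so $S$ occurs as a suffix of that prefix occurrence of $r$, ending at position $|r|$, as well as at the very end of $w$, ending at position $|w|>|r|$. These are two distinct occurrences of $S$, contradicting unioccurrence. Hence $|S|>|r|=|w|-p$. Applying the same argument to $w^R$ shows that the longest palindromic prefix also exceeds $|r|$ in length.

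Next comes the key periodicity lemma, which I would prove separately: if $y$ is primitive and $y^\omega$ contains a palindromic factor of length at least $|y|$, then $y$ is symmetric. The idea is to conjugate $y$ so that the palindrome becomes a prefix $t$ of $(y')^\omega$ with $|t|\ge|y'|=|y|$; since $t$ is a palindrome of length $\ge|y'|$ having period $|y'|$, its length-$|y'|$ suffix is simultaneously the reverse of its length-$|y'|$ prefix $y'$ and a cyclic rotation of $y'$, so $y'$ is a conjugate of its reverse, i.e.\ symmetric, and as symmetry is invariant under conjugation, $y$ is symmetric too. Using this, whenever $S$ (or the longest palindromic prefix) has length at least $p$ we obtain a palindromic factor of $z^\omega$ of length $\ge p=|z|$, whence $z$ is symmetric and we are done. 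In particular this is automatic once $|w|\ge 2p$, since then $|S|>|w|-p\ge p$.

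The main obstacle is the residual case in which the period exceeds half the length, so $w=zz'$ with $z'$ a proper prefix of $z$, and both the longest palindromic prefix and the longest palindromic suffix are \emph{shorter} than $p$. Here I would argue that this case cannot arise for a rich closed word. If $z$ were not symmetric, the contrapositive of the periodicity lemma forces every palindromic factor of $z^\omega$, hence of $w$, to have length at most $p-1$. Combining this length bound with the rich-word description—each prefix of $w$ contributes exactly one new palindrome, namely its unioccurrent longest palindromic suffix—and with the periodicity of $w$ (a unioccurrent palindromic suffix of a prefix $w[1..i]$ of period $p$ must have length exceeding $i-p$, or else it repeats one period earlier), I expect to derive a contradiction with richness, in the spirit of the Fine--Wilf discussion of Section~2 on the near-uniqueness of word-periods longer than half the length. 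Pinning down this counting argument—establishing that a rich closed word whose period exceeds half its length still contains a palindrome spanning a full period—is the delicate point; once it is settled, $z$ is symmetric in every case and $w$ is a palindromic periodicity.
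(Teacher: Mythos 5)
Your overall reduction is the right one: the paper's own proof is a one-line citation of Bucci, de Luca, and De Luca \cite{Bucci&Luca&DeLuca:2009}, who proved precisely that a rich and closed word has a symmetric fractional root, and your plan is to reprove that statement directly. Most of your intermediate steps are sound. For a closed word the longest repeated prefix coincides with the longest border $r$, so $p=|w|-|r|$ is indeed the minimal period and $z$ the fractional root; in a rich word the longest palindromic suffix $S$ is unioccurrent, and your two-occurrence argument correctly yields $|S|>|r|$; and your conjugation lemma is correct: if $t$ is a palindromic factor of $y^\omega$ with $|t|\ge |y|$, then after conjugating, the length-$|y|$ suffix of $t$ is both the reverse of $y$ (by palindromicity) and a rotation of $y$ (by periodicity), so $y$ is conjugate to its reverse, and symmetry is a conjugacy-class property. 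This genuinely settles every case in which $w$ contains a palindromic factor of length at least $p$ --- in particular whenever $|w|\ge 2p$.

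However, the residual case --- $p>|w|/2$ and every palindromic factor of $w$ of length less than $p$ --- is exactly where your text stops being a proof: you say you ``expect to derive a contradiction'' and call the needed counting argument ``the delicate point.'' Nothing you write establishes that a rich closed word must contain a palindrome spanning a full period, nor that $z$ is symmetric in this case by other means. The inequality $|S|>|r|$ only gives $|S|>|w|-p$, which leaves the whole range $|w|-p<|S|<p$ open, and the per-prefix observation (the new unioccurrent palindromic suffix at position $i$ has length exceeding $i-p$) produces the lower bounds $1,2,\dots,|r|$ on the lengths of the $|r|$ new palindromes appearing after position $p$, which is not by itself contradictory. This missing case is the actual content of the cited theorem, not a routine detail; note that by the Fine--Wilf remark in Section~2, a non-symmetric fractional root forces any symmetric word-period to be longer than half the word, so the hard regime is unavoidable. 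One route to closing it: a closed word is a complete return word to its longest border $r$, and in a rich word every complete return to a \emph{palindromic} factor is a palindrome; so if $r$ is a palindrome then $w$ itself is a palindrome and $|S|=|w|\ge p$, reducing your residual case to non-palindromic borders --- but that subcase still requires the structural analysis of \cite{Bucci&Luca&DeLuca:2009}, which you would either have to reproduce or, as the paper does, simply cite.
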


\begin{proof}
   In~\cite{Bucci&Luca&DeLuca:2009}, Bucci, de Luca, and De Luca proved  that if a word is rich and closed, then its fractional root is symmetric. Hence it is a palindromic periodicity.
\end{proof}

Notice that the language of rich words is factorial (closed under taking factors) while the language of palindromic periodicities is not.

\bigskip

Let us now consider larger alphabets.

Recall that an infinite word is \textit{episturmian} if its set of finite factors is closed under reversal and it has at most one left special factor (i.e., a factor that occurs preceded by at least two different letters) of each length. Moreover, an episturmian word is \textit{standard} if all of its left special factors are prefixes of it. Episturmian words are a generalization of Sturmian words to larger alphabets. See~\cite{Glen&Justin:2009} for a survey.

\begin{theorem}
Every nonempty prefix of a standard episturmian word is a palindromic periodicity.
\end{theorem}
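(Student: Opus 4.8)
The plan is to use the well-known representation of standard episturmian words by iterated palindromic closure (Droubay, Justin, and Pirillo; see the survey of Glen and Justin). Write $s$ for the standard episturmian word in question and let $\Delta = x_1 x_2 \cdots$ be its directive word. Put $u_0 = \varepsilon$ and $u_{n+1} = (u_n x_{n+1})^{(+)}$, where $w^{(+)}$ denotes the right palindromic closure of $w$, i.e.\ the shortest palindrome having $w$ as a prefix. Then each $u_n$ is a palindromic prefix of $s$, the lengths $|u_n|$ strictly increase to infinity, and $s = \lim_n u_n$. By the equivalence established in the general remarks (a word is a palindromic periodicity if and only if it has a symmetric word-period), it suffices to exhibit, for each nonempty prefix $w$ of $s$, a symmetric word-period $r$ with $|r| \le |w|$.

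The central observation is that consecutive palindromic prefixes furnish periods. Since $u_n$ is a prefix of the palindrome $u_{n+1}$ and $u_n = u_n^R$, the word $u_n$ is also a suffix of $u_{n+1}$; hence $u_n$ is a border of $u_{n+1}$, so $u_{n+1}$ has period $p_{n+1} := |u_{n+1}| - |u_n|$. Given a nonempty prefix $w$, choose $n$ with $|u_n| < |w| \le |u_{n+1}|$ (possible because $|u_0| = 0$ and the lengths grow to infinity). Let $r_n$ be the prefix of $s$ of length $p_{n+1}$. Then $w$ is a prefix of $u_{n+1}$, and therefore also has period $p_{n+1}$; provided $|w| \ge p_{n+1}$, the word $r_n$ is a word-period of $w$ with $|r_n| \le |w|$. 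Thus the theorem reduces to two claims: that no prefix $w$ can fall in a ``gap'' of length below $p_{n+1}$, and that $r_n$ is symmetric.

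For the first claim, the length identity for palindromic closure gives $|u_{n+1}| = 2|u_n| + 2 - |q|$, where $q$ is the longest palindromic suffix of $u_n x_{n+1}$; since $|q| \ge 1$ (the single letter $x_{n+1}$ is already such a suffix), we get $p_{n+1} = |u_n| + 2 - |q| \le |u_n| + 1 \le |w|$, so indeed $|w| \ge p_{n+1}$ and no gap occurs. For the second claim, I would argue as follows: $u_{n+1}$ is a palindrome of period $p_{n+1}$, so its suffix of length $p_{n+1}$ equals the reverse $r_n^R$ of its prefix $r_n$ of length $p_{n+1}$; on the other hand, in any word of period $p_{n+1}$ every factor of length $p_{n+1}$ is a cyclic rotation, hence a conjugate, of the length-$p_{n+1}$ prefix. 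Therefore $r_n^R$ is a conjugate of $r_n$, which is exactly the definition of $r_n$ being symmetric. Combining the two claims produces a symmetric word-period for every nonempty prefix.

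I expect the main obstacle to be the bookkeeping behind the first claim, namely checking that the period $p_{n+1}$ never exceeds $|u_n| + 1$, so that the single candidate $r_n$ really covers all prefixes $w$ with $|u_n| < |w| \le |u_{n+1}|$; the borderline is the ``new-letter'' case $u_{n+1} = u_n x_{n+1} u_n$, where $p_{n+1} = |u_n| + 1$ and $r_n = u_n x_{n+1}$ is symmetric on the nose. By contrast, the conjugacy argument giving symmetry of $r_n$ is short and uniform, and it is precisely the reduction to \emph{one} symmetric word-period per prefix that makes the prefix hypothesis (rather than arbitrary factors) suffice.
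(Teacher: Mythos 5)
Your proof is correct, but it takes a genuinely different route from the paper. The paper's proof is a one-line citation: de Luca and De Luca (\emph{Pseudopalindrome closure operators in free monoids}, 2006) proved that the \emph{fractional root} of every nonempty prefix of a standard episturmian word is symmetric, and symmetry of the fractional root immediately gives a palindromic periodicity. You instead give a self-contained construction (modulo the standard Droubay--Justin--Pirillo representation by iterated palindromic closure): you use the palindromic prefixes $u_n$ as borders of $u_{n+1}$ to extract the period $p_{n+1}=|u_{n+1}|-|u_n|$, verify via the closure length identity $|u_{n+1}|=2|u_n|+2-|q|$ that $p_{n+1}\le |u_n|+1\le |w|$ so no prefix falls in a gap, and then prove symmetry of the length-$p_{n+1}$ prefix $r_n$ by a clean conjugacy argument (in a palindrome of period $p$, the length-$p$ suffix is simultaneously the reverse of the length-$p$ prefix and a rotation of it). All steps check out, including the edge case $p_{n+1}\ge 1$ and the new-letter case $u_{n+1}=u_nx_{n+1}u_n$, and you correctly exhibit a symmetric word-period of length at most $|w|$, which matches the definition exactly. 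Note that your argument proves something slightly weaker than the cited lemma---$r_n$ is \emph{a} symmetric word-period, not necessarily the fractional root (which could be shorter than $p_{n+1}$)---but that weaker statement fully suffices for the theorem. What each approach buys: the paper's citation gets the stronger structural fact for free and is shorter; your argument is elementary and transparent, makes visible exactly where the prefix hypothesis is used (the palindromic prefixes $u_n$ need not align with arbitrary factors, consistent with the paper's observation that the factor $102$ of the Tribonacci word is not a palindromic periodicity), and would be the natural proof to include if one wanted the paper self-contained.
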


\begin{proof}
In~\cite{Luca&DeLuca:2006b}, de Luca and De Luca proved  that for every nonempty prefix $w$ of a standard episturmian word, the fractional root of $w$ is symmetric. Hence $w$ is a palindromic periodicity.
\end{proof}

In particular, then, all nonempty prefixes of the Tribonacci word, the fixed point of the morphism
$0 \rightarrow 01$, $1 \rightarrow 02$, $2 \rightarrow 0$, are palindromic periodicities. However, not all factors of the Tribonacci word are palindromic periodicities, $102$ being a smallest example.    See Section~\ref{trib-sec} for more discussion.

\section{Results for automatic words}

Let $\pp_{\bf x} (n)$ denote the number of length-$n$
factors of an infinite word $\bf x$ that are palindromic periodicities.

\begin{theorem}
Let $\bf x$ be a generalized automatic infinite word.   Then there exists
a finite automaton taking two inputs $i$ and $n$ in parallel, and accepting
if and only if ${\bf x}[i..i+n-1]$ is a palindromic periodicity.
\end{theorem}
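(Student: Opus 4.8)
The plan is to express the property ``$\mathbf{x}[i..i+n-1]$ is a palindromic periodicity'' as a first-order formula in the logical structure $\langle \mathbb{N}, +, <, (\mathbf{x}[\cdot] = a)_{a \in \Sigma}\rangle$, and then to invoke the metatheorem underlying \texttt{Walnut} (the generalization of B\"uchi's theorem due to Bruy\`ere, Hansel, Michaux, and Villemaire): for a generalized automatic word over a numeration system in which addition and order are recognizable, every predicate definable by such a formula is itself recognized by a finite automaton reading its integer arguments in parallel \cite{Shallit:2023}. Applied to our formula, with free variables $i$ and $n$, this produces the required automaton. First I would invoke the characterization from the general remarks that $w := \mathbf{x}[i..i+n-1]$ is a palindromic periodicity if and only if it has a symmetric word-period; thus it suffices to express ``there is a length $q$, with $1 \le q \le n$, such that $w$ has period $q$ and the length-$q$ prefix $z = \mathbf{x}[i..i+q-1]$ is symmetric.''

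Each ingredient is first-order. That $w$ has period $q$ becomes
\[
\mathrm{PER}(i,n,q) \;:=\; \forall j \, \big( (j+q < n) \implies \mathbf{x}[i+j] = \mathbf{x}[i+j+q] \big) .
\]
Using the characterization of symmetric words as products of two palindromes, I would express ``$z$ is symmetric'' by guessing the split point $t$ and asserting that the two pieces are palindromes:
\[
\mathrm{SYM}(i,q) \;:=\; \exists t \, \big( t \le q \,\wedge\, \mathrm{PAL}(i, i+t) \,\wedge\, \mathrm{PAL}(i+t, i+q) \big),
\]
where $\mathrm{PAL}(c,d)$ asserts that the factor occupying positions $[c,d)$ reads the same in both directions, namely $\forall e \, \big( (c \le e \,\wedge\, e < d) \implies \mathbf{x}[e] = \mathbf{x}[c+d-1-e] \big)$. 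The whole property is then
\[
\exists q \, \big( 1 \le q \,\wedge\, q \le n \,\wedge\, \mathrm{PER}(i,n,q) \,\wedge\, \mathrm{SYM}(i,q) \big).
\]
Note that the two boundary cases $t=0$ and $t=q$, in which one palindrome is empty, are handled automatically: the corresponding universally quantified implication is vacuously true.

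A few routine points must be dispatched. The letter-equality atoms $\mathbf{x}[p] = \mathbf{x}[p']$ are abbreviations for the finite disjunction $\bigvee_{a \in \Sigma} (\mathbf{x}[p] = a \,\wedge\, \mathbf{x}[p'] = a)$, which is legitimate since $\Sigma$ is finite. Subtractions such as $c+d-1-e$ are not terms of $\langle \mathbb{N}, +\rangle$, but under their guards the reflected index is a well-defined nonnegative integer, so one rewrites $\mathbf{x}[e] = \mathbf{x}[c+d-1-e]$ as $\exists f \, (f + e + 1 = c + d \,\wedge\, \mathbf{x}[e] = \mathbf{x}[f])$, and similarly for $i+j+q$. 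Once the formula is in this normal form, the metatheorem builds the automaton by the usual closure operations: products for conjunctions, complementation for negations, and projection followed by the subset construction for the existential quantifiers over $q$, $t$, and $f$. The unboundedness of $q$ and $t$ is not an issue, since these are simply projected-out input tracks in the chosen numeration system.

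I do not expect a deep obstacle here; the real content is getting the translation right, and the proof is essentially the transcription that \texttt{Walnut} automates. The main subtlety is bookkeeping: ensuring that each quantified variable is correctly bounded so that every reflected position stays inside $[i, i+q)$, and that the period quantifier over $q$ and the split quantifier over $t$ are nested in the right order so that $\mathrm{SYM}$ refers to the prefix of the chosen word-period. Finiteness of the resulting automaton is guaranteed precisely by the addability of the numeration system, which is exactly what the hypothesis that $\mathbf{x}$ is generalized automatic supplies.
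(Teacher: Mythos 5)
Your proof is correct and takes essentially the same approach as the paper: both express the property as a first-order formula asserting the existence of a period $q\leq n$ whose corresponding length-$q$ prefix is a product of two (possibly empty) palindromes, and then invoke the Bruy\`ere--Hansel--Michaux--Villemaire metatheorem to obtain the parallel-input automaton. The additional bookkeeping you supply (eliminating subtraction via an existentially quantified variable, expanding letter equality into a finite disjunction over $\Sigma$) is detail the paper leaves implicit, but the substance is identical.
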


\begin{proof}
The idea is to create a first-order logical formula that evaluates to
{\tt TRUE} if ${\bf x}[i..i+n-1]$ is a palindromic periodicity.  We
do this in several steps:
\begin{itemize}
\item $\per(i,n,p)$ asserts that ${\bf x}[i..i+n-1]$ has period $p$
\item $\pal(i,n)$ asserts that ${\bf x}[i..i+n-1]$ is a palindrome
\item $\paltwo(i,n)$ asserts that ${\bf x}[i..i+n-1]$ is the product of
two (possibly empty) palindromes 
\item $\ispp(i,n)$ asserts that
${\bf x}[i..i+n-1]$ is a palindromic periodicity.
\end{itemize}
These can be defined as follows:
\begin{align*}
\per(i,n,p) &:= \forall t\ (t\geq i \andd t<i+n-p) \implies {\bf x}[t]={\bf x}[t+p] \\
\pal(i,n) &= \forall t,u \ (t\geq i \andd t<i+n \andd t+u=2i+n-1) \implies
	{\bf x}[t] = {\bf x}[u] \\
\paltwo(i,n) &= \exists m \ m\leq n \andd \pal(i,m) \andd \pal(i+m,n-m) \\
 \ispp(i,n) &:= \exists p \ p\geq 1 \andd p\leq n \andd n\geq 1  \andd
	\per(i,n,p) \andd \paltwo(i,p) .
\end{align*}
Now, by the classic result of
Bruy\`ere et al.~\cite{Bruyere&Hansel&Michaux&Villemaire:1994}, we know there is an algorithm to convert these first-order statements to the desired automata.
\end{proof}

\begin{corollary}
Let $\bf x$ be a generalized automatic infinite word.  There is an algorithm that, given the automaton computing $\bf x$, decides whether $\bf x$ contains arbitrarily long palindromic periodicities.
\end{corollary}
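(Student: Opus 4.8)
The plan is to reduce the statement to a decidable property of the automaton produced in the preceding theorem. Recall that $\bf x$ contains arbitrarily long palindromic periodicities precisely when the set of lengths
$$ L := \{ n \geq 1 \suchthat \pp_{\bf x}(n) \geq 1 \} = \{ n \geq 1 \suchthat \exists i \ \ispp(i,n) \} $$
is infinite (equivalently, unbounded). So it suffices to construct an automaton recognizing $L$ and then test it for infiniteness.

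First I would take the two-input automaton $M$ guaranteed by the previous theorem, which accepts the pair $(i,n)$ (read in parallel in the relevant numeration system) exactly when $\ispp(i,n)$ holds. Projecting $M$ onto its second coordinate --- that is, existentially quantifying over $i$ --- yields, by the standard closure of recognizable sets under projection, a finite automaton $M'$ accepting the representations of exactly those $n$ belonging to $L$. This is the same construction that underlies existential-quantifier elimination, so it is entirely effective.

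It then remains to decide whether the set $L$ recognized by $M'$ is infinite. Since $L$ is a regular set of integers, this is decidable: one examines the trimmed automaton $M'$ and checks whether some cycle lies on a path from the initial state to an accepting state, equivalently that $M'$ accepts representations of unbounded numerical value. A standard pumping argument guarantees that the language of canonical representations is infinite if and only if $L$ is unbounded, so this test answers the question. Alternatively, and more directly, one can simply decide the first-order sentence
$$ \forall N \ \exists i \ \exists n \ (n \geq N) \andd \ispp(i,n), $$
whose truth is exactly the assertion to be decided; by the theorem of Bruy\`ere et al.~already invoked, this sentence is decidable.

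The only real subtlety --- rather than a genuine obstacle --- lies in the passage between the \emph{language} accepted by $M'$ and the \emph{set of integers} $L$: one must ensure that in the numeration system underlying the generalized automatic word, infiniteness of the set of accepted representations corresponds to unboundedness of $L$. For positional systems with canonical (leading-zero-normalized) representations this equivalence is routine, and the comparison predicate $n \geq N$ needed above is expressible in the governing Presburger-type theory, so the whole procedure goes through.
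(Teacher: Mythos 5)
Your proposal is correct, and its ``alternative, more direct'' route is in fact exactly the paper's proof: the paper simply forms the closed sentence $\forall m\ \exists i,n\ n>m \andd \ispp(i,n)$ and feeds it to the decision procedure of Bruy\`ere et al., which produces a one-state automaton accepting everything or nothing, thereby deciding the question. Your primary route is a genuinely different (though standard) decomposition: you make the quantifier elimination explicit by projecting the two-input $\ispp$-automaton onto the length coordinate to get an automaton $M'$ for the set $L$ of lengths realized by palindromic periodicities, and then you decide infiniteness of $L$ by a reachable-cycle test on the trimmed $M'$. Both are effective; what your version buys is transparency about the mechanics, and in particular it surfaces the one real subtlety --- that infiniteness of the language of canonical representations must correspond to unboundedness of the integer set $L$ in the (possibly Pisot-type) numeration system underlying a \emph{generalized} automatic word --- which the paper's one-line proof silently absorbs into the Bruy\`ere et al.\ machinery. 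Your handling of that point (canonical, leading-zero-normalized representations; monotonicity of value in representation length; expressibility of $n\geq N$) is adequate, so the argument stands; the paper's version is shorter because deciding closed sentences is already part of the cited theorem, making the explicit projection and pumping argument unnecessary.
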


\begin{proof}
It suffices to define the appropriate first-order formula, which is
$$ \forall m\ \exists i,n\ n>m \andd \ispp(i,n).$$
This can then be translated into an automaton of one state that either accepts everything (and so $\bf x$ contains arbitrarily long
palindromic periodicities) or nothing (and so $\bf x$ does not).
\end{proof}

\begin{corollary}
Let $\bf x$ be a generalized automatic infinite word.   Then the number $\pp_{\bf x} (n)$ of length-$n$ factors that are palindromic periodicities is a regular sequence, and furthermore there is an algorithm to compute a linear representation for it.
\end{corollary}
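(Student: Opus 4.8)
The plan is to reduce the counting of \emph{distinct} length-$n$ factors that are palindromic periodicities to the counting of a finite, first-order definable set of starting positions, and then to apply the general enumeration theorem for automatic sequences. First I would introduce formulas for factor equality and for a first occurrence, using the indexing predicate for $\bf x$ exactly as in the preceding theorem. Define
$$\faceq(i,j,n) := \forall t\ (t<n) \implies {\bf x}[i+t]={\bf x}[j+t],$$
which asserts that the length-$n$ factors beginning at positions $i$ and $j$ coincide, and
$$\novel(i,n) := \forall j\ (j<i) \implies \neg\, \faceq(i,j,n),$$
which asserts that $i$ is the leftmost occurrence of the factor ${\bf x}[i..i+n-1]$. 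Combining $\novel$ with the predicate $\ispp$ constructed in the proof of the preceding theorem, the set
$$S_n := \{\, i \suchthat \novel(i,n) \andd \ispp(i,n) \,\}$$
contains exactly one position, namely the first occurrence, for each distinct length-$n$ factor that is a palindromic periodicity. Hence $\pp_{\bf x}(n) = |S_n|$.

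Next I would note that $S_n$ is finite for every $n$, since a generalized automatic word has only finitely many (indeed $O(n)$) distinct factors of length $n$, and $S_n$ retains at most one starting position per such factor. Because $\faceq$, $\novel$, and $\ispp$ are all first-order formulas in the logical structure associated with $\bf x$, the relation ``$i \in S_n$'' is recognized by a finite automaton reading $i$ and $n$ in parallel, exactly as in the preceding theorem. The general enumeration theorem for automatic sequences (Charlier, Rampersad, and Shallit) then applies: whenever an automatic relation has, for each $n$, only finitely many witnesses $i$, the counting function $n \mapsto |S_n|$ is a regular sequence and a linear representation for it can be computed effectively from the automaton. Applying this to $S_n$ shows that $\pp_{\bf x}(n)$ is regular and yields the desired algorithm.

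The essential, and really the only conceptual, point is the reduction to a finite witness set. If one simply counted all positions at which a palindromic periodicity of length $n$ occurs, the count would in general be infinite, since factors of an infinite word recur, and one would in any case be counting occurrences rather than distinct factors. The predicate $\novel$ resolves both difficulties simultaneously: by retaining precisely the first occurrence of each factor it both guarantees finiteness and converts occurrence-counting into distinct-factor-counting. Everything that remains, namely intersecting and projecting the automata for $\novel$ and $\ispp$ and extracting a linear representation, is mechanical and is exactly what the counting procedure of {\tt Walnut} carries out; it introduces no new ideas.
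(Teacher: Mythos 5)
Your proposal is correct and follows essentially the same route as the paper: you define $\faceq$ and $\novel$ by first-order formulas, conjoin $\novel$ with $\ispp$ (exactly the paper's $\countpp$ predicate), and extract a linear representation via the standard counting machinery for automatic sequences. Your explicit justification that the witness set $S_n$ is finite (via the novel-occurrence trick) is a point the paper leaves implicit, but it is the same argument underlying the cited enumeration method.
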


\begin{proof}
We need three additional automata, computable from 
the following first-order statements:
\begin{itemize}
\item $\faceq(i,j,n)$ asserts that ${\bf x}[i..i+n-1]={\bf x}[j..j+n-1]$
\item $\novel(i,n)$ asserts that
the first occurrence of the factor
${\bf x}[i..i+n-1]$ is at position $i$ of $\bf x$
\item $\countpp(i,n)$ asserts that
${\bf x}[i..i+n-1]$ is a palindromic periodicity and is the first appearance of this factor in $\bf x$.
\end{itemize}
These can be defined as follows:
\begin{align*}
\faceq(i,j,n) &= \forall u,v \ (u\geq i \andd u<i+n \andd u-i=v-j) \implies
	{\bf x}[u] = {\bf x}[v] \\
 \novel(i,n) &=
\forall j \ (j<i) \implies \neg \faceq(i,j,n)\\
\countpp(i,n) &= \novel(i,n) \andd \ispp(i,n) .
\end{align*}
We  can now form the linear representation for $\pp_{\bf x}(i,n)$
directly from the automaton for
$\countpp$, as described in \cite[Chap.~9]{Shallit:2023}.
\end{proof}

\section{Results for some famous infinite words}

In this section, we use the results of the previous section to prove
a number of results about the palindromic periodicities occurring
in some famous infinite words.

\subsection{The period-doubling word}

Recall that the period-doubling sequence ${\bf pd} = 1011101010111011\cdots$ is the fixed
point of the morphism
$1 \rightarrow 10$, $0 \rightarrow 11$.

\begin{verbatim}
def pdper "At (t>=i & t+p<i+n) => PD[t]=PD[t+p]":
# does PD[i..i+n-1] have period p?
# 15 states
def pdpal "At,u (t>=i & t<i+n & t+u+1=2*i+n) => PD[t]=PD[u]":
# is PD[i..i+n-1] a palindrome?
# 6 states
def pd2pal "Em m<=n & $pdpal(i,m) & $pdpal(i+m,n-m)":
# is PD[i..i+n-1] the concatenation of two palindromes
# 27 states
def pdpp "Ep p>=1 & p<=n & n>=1 & $pdper(i,n,p) & $pd2pal(i,p)":
# is PD[i..i+n-1] a palindromic periodicity?
# 8 states
def pdfaceq "Au,v (u>=i & u<i+n & v+i=u+j) => PD[u]=PD[v]":
# is PD[i..i+n-1]=PD[j..j+n-1]?
# 7 states
def pdnovel "Aj (j<i) => ~$pdfaceq(i,j,n)":
# is PD[i..i+n-1] novel, that is, is it the first occurrence of this factor?
# 5 states
def countpdpp n "$pdpp(i,n) & $pdnovel(i,n)":
# is PD[i..i+n-1] novel and a palindromic periodicity?
# 10 states
\end{verbatim}

Figure~\ref{pdpp-fig} gives the automaton for {\tt pdpp} and
Figure~\ref{countpdpp-fig} gives the automaton for {\tt countpdpp}.
\begin{figure}[htb]
\begin{center}
\includegraphics[width=5.5in]{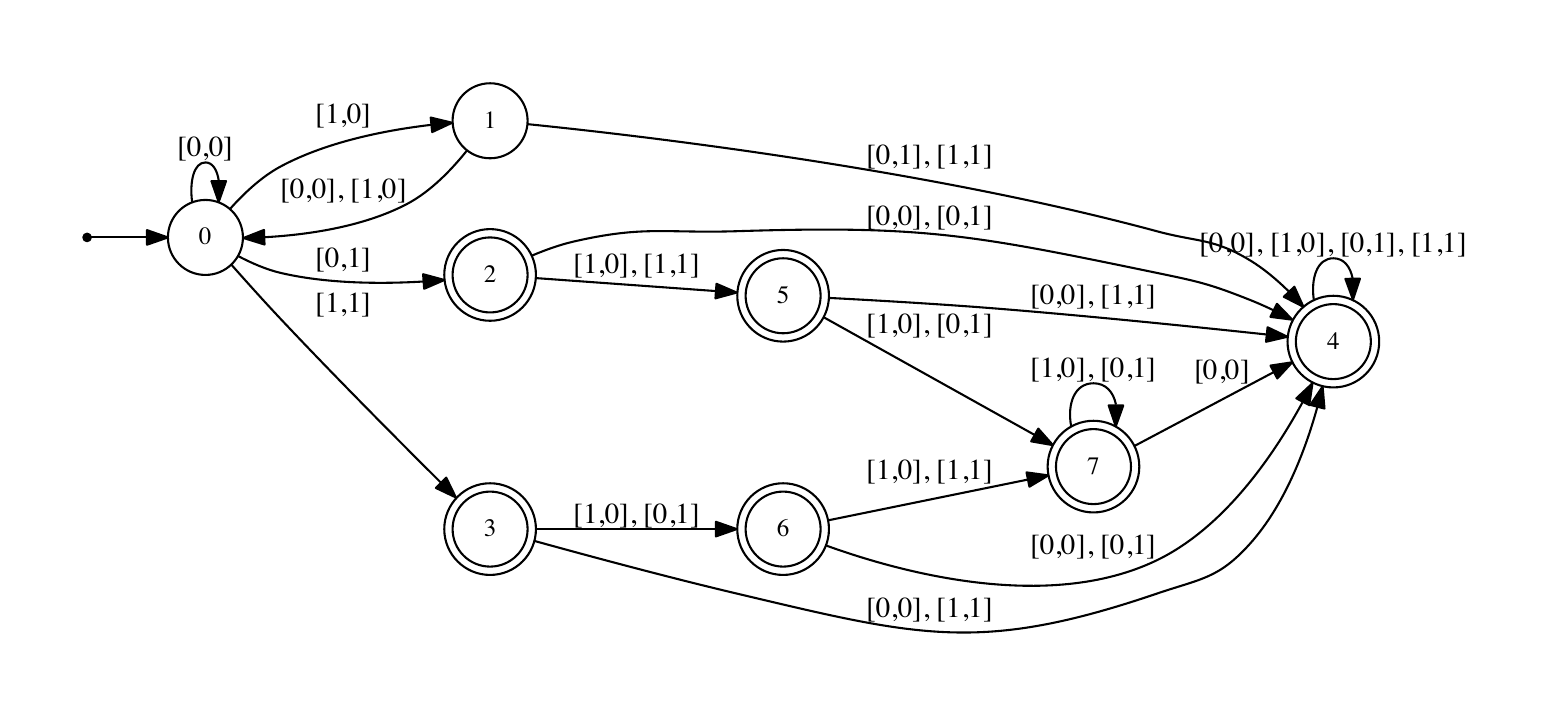}
\end{center}
\caption{Synchronized automaton for {\tt pdpp}; accepts the base-$2$
representation of $i$ and $n$ if ${\bf pd}[i..i+n-1]$ is a palindromic
periodicity.}
\label{pdpp-fig}
\end{figure}

\begin{figure}[htb]
\begin{center}
\includegraphics[width=5.5in]{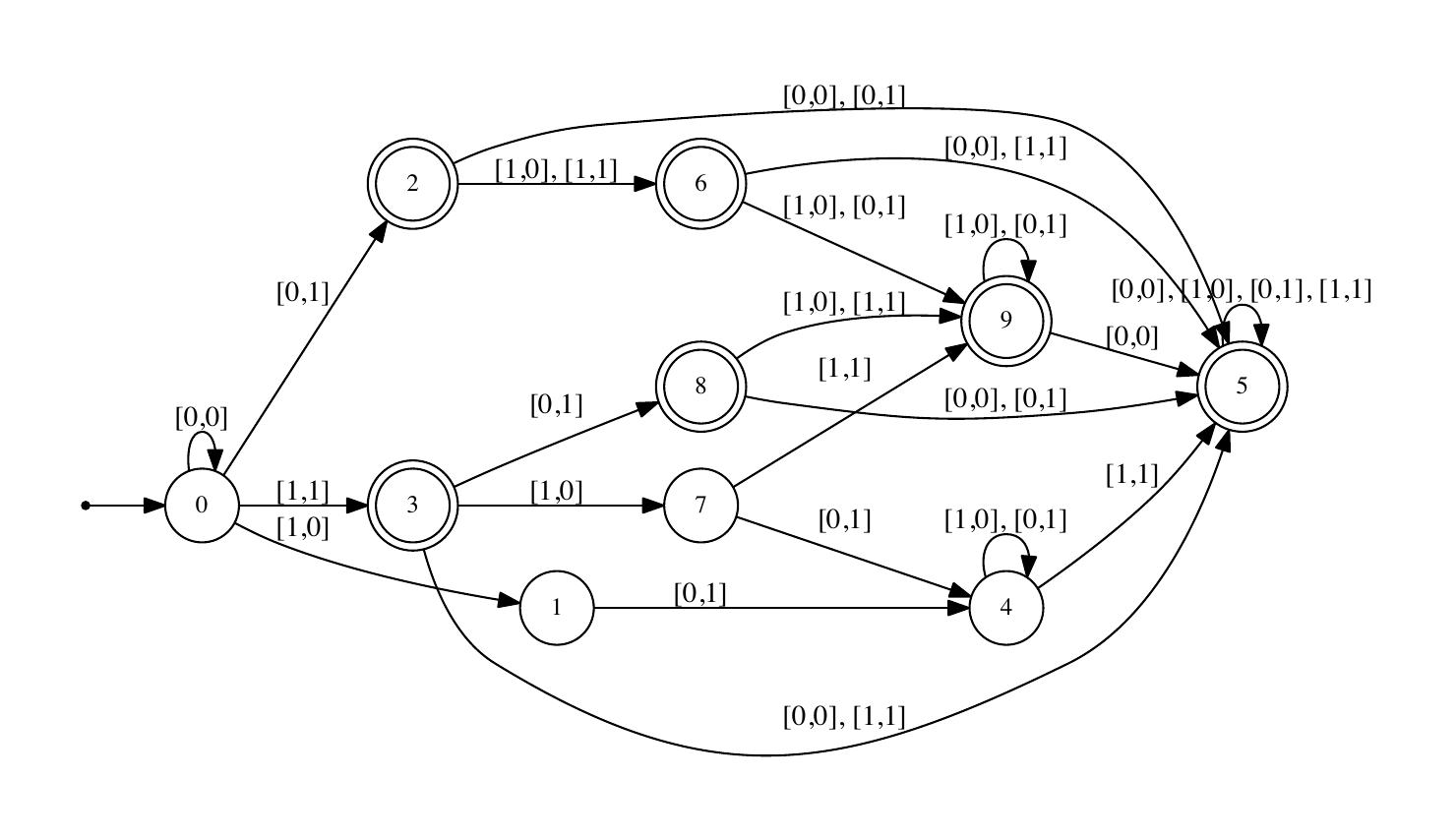}
\end{center}
\caption{Synchronized automaton for {\tt countpdpp}; accepts the base-$2$
representation of $i$ and $n$ if ${\bf pd}[i..i+n-1]$ is novel and
also a palindromic periodicity.}
\label{countpdpp-fig}
\end{figure}

Notice that {\tt pdpp} accepts the
base-$2$ representation of $(12,9)$, since ${\bf pd}[12..20] = 101110111$
is a palindromic periodicity (take $p = 1011101$, $s=11$).
but does not accept $(7,9)$, since ${\bf pd}[7..15] = 010111011$ is not
a palindromic periodicity.  On the other hand,
{\tt countpdpp} does not accept $(12,9)$, since the factor
${\bf pd}[12..20]$ is not novel; it occurs earlier
at ${\bf pd}[8..16]$.

\begin{theorem}
We have $\pp_{\bf pd} (n) \leq 5n/3$ for $n \geq 2$ and
$\pp_{\bf pd} (n) \geq (6n+6)/5$ for $n \geq 3$, and both
bounds are tight, in the sense that they hold with equality infinitely
often.
\end{theorem}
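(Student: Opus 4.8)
The plan is to leverage the machinery of the previous section. By the last corollary, $\pp_{\bf pd}(n)$ is a $2$-regular sequence, and Walnut, applied to the automaton {\tt countpdpp}, produces an explicit linear representation $(v, M_0, M_1, w)$ with $\pp_{\bf pd}(n) = v\, M_{d_1} M_{d_2} \cdots M_{d_k}\, w$, where $(d_1 d_2 \cdots d_k)_2 = n$. After minimizing this representation I would compute a finite basis $g_1 = \pp_{\bf pd}, g_2, \ldots, g_r$ for the $\mathbb{Z}$-module generated by the $2$-kernel of $\pp_{\bf pd}$, obtaining integer matrices $A_0, A_1$ such that the column vector $G(n) = (g_1(n), \ldots, g_r(n))^{T}$ satisfies the recurrences $G(2n) = A_0\, G(n)$ and $G(2n+1) = A_1\, G(n)$. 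Since the identity $N(n) = n$ and the constant $1$ also satisfy such recurrences, I would append them to $G$, so that both $5n - 3\,\pp_{\bf pd}(n)$ and $5\,\pp_{\bf pd}(n) - 6n - 6$ become fixed linear functionals of the enlarged vector $G(n)$.

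For the upper bound I would show $5n - 3\,\pp_{\bf pd}(n) \ge 0$ for $n \ge 2$ by strong induction on $n$, checking the base cases (small $n$) by direct evaluation. For the inductive step, writing $n$ as $2m$ or $2m+1$, the recurrences express $G(n)$ as $A_0\, G(m)$ or $A_1\, G(m)$, reducing the claim to a statement about $G(m)$ with $m < n$. Because a single recurrence for $\pp_{\bf pd}$ does not close on itself, the right formulation is to find a finite system of linear inequalities $L_1(G(n)) \ge 0, \ldots, L_s(G(n)) \ge 0$, one of which is the desired bound, defining a polyhedral cone $\mathcal{C}$ that (i) contains $G(n)$ for all small $n$ and (ii) is preserved by both $A_0$ and $A_1$, i.e.\ $A_0\, \mathcal{C} \subseteq \mathcal{C}$ and $A_1\, \mathcal{C} \subseteq \mathcal{C}$. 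Invariance under $A_0, A_1$ is a finite linear-algebra check, and together with the base cases it yields $G(n) \in \mathcal{C}$ for all $n \ge 2$, hence the bound. The lower bound $5\,\pp_{\bf pd}(n) - 6n - 6 \ge 0$ for $n \ge 3$ is handled identically, adjusting the cone and the base cases.

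For tightness I would locate the binary expansions that make the ratio $\pp_{\bf pd}(n)/n$ extremal. Along a periodically continued digit string the products $M_{d_1} \cdots M_{d_k}$ grow like a power of the corresponding product of the $M_d$, so the extreme ratios are attained on eventually periodic expansions corresponding to the critical cycles in the representation. I would read off the cycle giving the upper ratio $5/3$ and the one giving the asymptotic lower ratio $6/5$, translate each into an explicit arithmetic family of $n$ (values whose base-$2$ expansion is a fixed prefix followed by a repeated block), and confirm that equality holds for infinitely many $n$ in the family, either by evaluating the now periodic matrix product in closed form or by encoding the family in Walnut and verifying the identity automatically.

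The main obstacle is the inductive step for the two linear bounds: since the minimized representation has dimension greater than $1$, $\pp_{\bf pd}(n)$ is entangled with the auxiliary kernel sequences, and the difficulty is to discover the correct finite system of simultaneous inequalities whose cone is invariant under both $A_0$ and $A_1$. This amounts to guessing, or solving a small linear program for, the extra inequalities that strengthen the induction hypothesis enough to close; once a candidate cone is found, verifying its invariance and the base cases is routine. A secondary subtlety is the additive constant $+6$ in the lower bound, which forces a careful separation of the transient (leading digits) from the periodic part of the expansion, both in the induction and in the tightness construction.
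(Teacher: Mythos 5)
Your route is viable and genuinely different from the paper's. The paper does no induction at all: after extracting the rank-$10$ linear representation from {\tt countpdpp}, it \emph{guesses} an exact piecewise closed form for $\pp_{\bf pd}(n)$ (writing $n = 2^k + r$ with $0 \le r < 2^k$ and giving four linear cases in $r$), encodes the relation $z = f(n)$ as a synchronized {\tt Walnut} predicate {\tt guess\_pdpp}, builds a second linear representation from it, and verifies the two representations compute the same function --- a finite, decidable equality check. The bounds then become single universally quantified {\tt Walnut} queries ($3z \le 5n$ for $n \ge 2$, $5z \ge 6n+6$ for $n \ge 3$), and tightness comes for free with an exact characterization of the equality sets: $\pp_{\bf pd}(n) = 5n/3$ iff $n = 3\cdot 2^k$, and $\pp_{\bf pd}(n) = (6n+6)/5$ iff $n = 5\cdot 2^k - 1$. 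Your invariant-cone induction on the kernel recurrences $G(2n) = A_0 G(n)$, $G(2n+1) = A_1 G(n)$ (with $n$ and $1$ adjoined) would, if completed, yield a human-checkable certificate and avoids having to guess the full formula; but you correctly identify that the cone itself is the crux, and you leave it unconstructed. There is no a priori guarantee that a \emph{polyhedral} invariant cone containing the reachable vectors exists --- here it does, precisely because $\pp_{\bf pd}(n)$ is piecewise linear along dyadic scales, but one only learns this after essentially discovering the exact formula the paper guesses outright. Your tightness step also needs one repair: the claim that extremal ratios are attained on eventually periodic expansions is heuristic as stated, and in any case asymptotic attainment of the ratios $5/3$ and $6/5$ does not by itself give equality at actual integers (your own remark about the $+6$ constant); your fallback of verifying equality on an explicit family is the right fix, and the correct families are $n = 3\cdot 2^k$ (binary $11\,0^k$) and $n = 5\cdot 2^k - 1$ (binary $100\,1^k$), on which the matrix products $v\,\mu(1)\mu(1)\mu(0)^k w$ and $v\,\mu(1)\mu(0)\mu(0)\mu(1)^k w$ can be evaluated in closed form. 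In short: your plan is sound in outline but has its hardest step deferred, whereas the paper trades the cone search for a formula guess plus a mechanical equivalence check, which additionally buys the exact equality sets rather than just equality infinitely often.
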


\begin{proof}
First we use {\tt countpdpp} to obtain a linear representation
$(v, \mu, w)$ for $\pp_{\bf pd} (n)$.  It is
$$
v = \left[\begin{smallarray}{c}
1\\
 1\\
 0\\
 0\\
 0\\
 0\\
 0\\
 0\\
 0\\
 0
\end{smallarray} \right];
\quad
\mu(0) = \left[\begin{smallarray}{cccccccccc}
1&1&0&0&0&0&0&0&0&0\\
 0&0&0&0&0&0&0&0&0&0\\
 0&0&0&0&0&1&1&0&0&0\\
 0&0&0&0&0&1&0&1&0&0\\
 0&0&0&0&1&0&0&0&0&0\\
 0&0&0&0&0&2&0&0&0&0\\
 0&0&0&0&0&1&0&0&0&1\\
 0&0&0&0&0&0&0&0&0&0\\
 0&0&0&0&0&1&0&0&0&1\\
 0&0&0&0&0&1&0&0&0&1
\end{smallarray} \right]; \quad
\mu(1) = \left[\begin{smallarray}{cccccccccc}
0&0&1&1&0&0&0&0&0&0\\
 0&0&0&0&1&0&0&0&0&0\\
 0&0&0&0&0&1&1&0&0&0\\
 0&0&0&0&0&1&0&0&1&0\\
 0&0&0&0&1&1&0&0&0&0\\
 0&0&0&0&0&2&0&0&0&0\\
 0&0&0&0&0&1&0&0&0&1\\
 0&0&0&0&1&0&0&0&0&1\\
 0&0&0&0&0&1&0&0&0&1\\
 0&0&0&0&0&0&0&0&0&1
\end{smallarray} \right]; 
\quad
w = \left[\begin{smallarray}{c}
0\\
 0\\
 1\\
 1\\
 0\\
 1\\
 1\\
 0\\
 1\\
 1\
\end{smallarray}
\right].
$$
\end{proof}

We can now use this linear representation to prove the following.
\begin{theorem}
Define 
$ f(0) = 0$ and $f(1) = 2$.  If $n \geq 2$, write
$n = 2^k + r$ for $0 \leq r < 2^k$ and define
$$ f(n) = 
\begin{cases}
	3\cdot 2^{k-1}, &  \text{if $0 \leq r < 2^{k-2}$;} \\
	2^{k+1}, & \text{if $2^{k-2} \leq r < 2^{k-1}$; } \\
	6 \cdot2^{k-1} - r, &  \text{if $2^{k-1} \leq r < 3\cdot 2^{k-2}$;} \\
	7 \cdot2^{k-1} -r, & \text{if $3\cdot2^{k-2} \leq r < 2^k$.}
	\end{cases}
$$
Then $f(n) = \pp_{\bf pd} (n)$ for all $n\geq 0$.
\end{theorem}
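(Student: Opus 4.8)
The plan is to verify the guessed closed form $f$ by the standard ``guess-and-check'' method for automatic sequences (see \cite[Chap.~9]{Shallit:2023}). Since the previous theorem supplies a linear representation $(v,\mu,w)$ of dimension $10$ for the $2$-regular sequence $\pp_{\bf pd}$, it suffices to exhibit a linear representation for $f$ as well, and then to prove that the two sequences agree by showing that their difference $g := \pp_{\bf pd} - f$ is the identically zero sequence. Equality of two $\mathbb{Q}$-recognizable series is decidable, so the whole verification reduces to a finite computation once both representations are in hand.

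First I would show that $f$ is itself $2$-regular and write down a linear representation $(v',\mu',w')$ for it. The key observation is that $f(n)$ is completely determined by the base-$2$ expansion of $n$: if $n = 2^k + r$ with $0 \le r < 2^k$, then $k$ is read off from the length of the expansion, the two bits immediately following the leading $1$ select which of the four cases applies, and within each case $f$ is an \emph{affine} function of the two $2$-regular quantities $2^{k-1}$ and $r$ (namely $3\cdot 2^{k-1}$, $4\cdot 2^{k-1}$, $6\cdot 2^{k-1}-r$, and $7\cdot 2^{k-1}-r$). Since the indicator of ``the top two bits equal $ab$'' is $2$-automatic, and since $2^{k-1}$, $r$, and the constant $1$ are all $2$-regular, $f$ is a finite $\mathbb{Z}$-linear combination of $2$-regular sequences and hence $2$-regular. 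Concretely I would build a synchronized automaton recognizing the graph $\{(n, f(n))\}$ on parallel base-$2$ input and then extract a linear representation from it, taking care to special-case the small values $f(0)=0$, $f(1)=2$ (and the degenerate range $k=1$, where $2^{k-2}$ is not an integer) by adjusting the initial vector.

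Next I would form a linear representation for the difference $g = \pp_{\bf pd} - f$ by the usual direct-sum construction: the matrices are the block-diagonal $\mu(a)\oplus\mu'(a)$, the initial vector is the concatenation of $v$ and $v'$, and the final vector is the concatenation of $w$ and $-w'$; this has dimension $D = 10 + \dim f$. To conclude $g \equiv 0$ I would invoke the standard criterion for linear representations: the left-module generated by the initial row vector under the transition matrices stabilizes within $D-1$ steps, so $g$ is the zero series if and only if $g(n)=0$ for every $n$ whose base-$2$ representation has length at most $D-1$. This is a finite check, which I would carry out by minimizing the representation (or equivalently by running the \texttt{Walnut} equality test), confirming that it collapses to the zero series and hence that $f(n) = \pp_{\bf pd}(n)$ for all $n$.

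The main obstacle I anticipate is constructing the linear representation of $f$ correctly, rather than the final zero-test, which is routine once the representations are aligned. The delicate points are the two $r$-dependent cases, which force the representation to accumulate the trailing-bit value $r$ as the digits are consumed, and the reconciliation of numeration conventions (most-significant-digit-first reading, leading digits, and the treatment of $n<4$) between the given representation for $\pp_{\bf pd}$ and the new one for $f$, so that the block subtraction genuinely computes $\pp_{\bf pd}(n) - f(n)$ on matching inputs. Once these conventions agree, the equality $g\equiv 0$ follows from the finite computation described above.
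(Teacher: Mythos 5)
Your proposal is correct and matches the paper's proof in essence: the paper likewise builds a synchronized automaton for the graph $\{(n,f(n))\}$ (via the {\tt Walnut} formula {\tt guess\_pdpp}, using a regular-expression predicate for powers of $2$ and affine arithmetic in each of the four cases), extracts a linear representation for $f$ (via {\tt count2}), and then certifies $f = \pp_{\bf pd}$ by the standard decidable equality test for linear representations, which is exactly your difference-representation zero-test. The implementation details you flag (accumulating $r$, numeration conventions, small $n$) are precisely what the {\tt Walnut} encoding handles.
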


\begin{proof}
We create a {\tt Walnut} formula {\tt guess\_pdpp}
of two arguments, $n$ and $z$, and
accepts if and only if $z = f(n)$, based on the formula in the statement
of the theorem.  Then we create a linear representation
computing $f$ with {\tt count2}.
\begin{verbatim}
reg power2 msd_2 "0*10*":
def guess_pdpp "(n=0&z=0)|(n=1&z=2)|
(Ex,r $power2(x) & x<=n & n<2*x & r+x=n &
((4*r<x&2*z=3*x)|(x<=4*r&2*r<x&z=2*x)|(x<=2*r&4*r<3*x&z+r=3*x)|
(3*x<=4*r&r<x&2*z+2*r=7*x)))":
def count2 n "Ez $guess_pdpp(n,z) & i<z"::
\end{verbatim}
Finally, we test equality of the functions computed by {\tt countpdpp} and
{\tt count2} using the method described in \cite{Shallit:2023}.
They are equal.
\end{proof}

\begin{corollary}
We have $\pp_{\bf pd} (n) \leq 5n/3$ for $n \geq 2$ and
$\pp_{\bf pd} (n) \geq (6n+6)/5$ for $n \geq 3$, and both
bounds are tight, in the sense that they hold with equality infinitely
often.
\end{corollary}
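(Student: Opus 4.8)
The plan is to deduce the corollary directly from the closed-form expression $\pp_{\bf pd}(n) = f(n)$ established in the preceding theorem, reducing both statements to elementary inequalities in the parameters $k$ and $r$ of the decomposition $n = 2^k + r$, $0 \le r < 2^k$. After clearing denominators, the two claimed bounds become affine inequalities in $r$ that can be checked piecewise against the four cases of the formula.

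For the upper bound I would rewrite $\pp_{\bf pd}(n) \le 5n/3$ as $3 f(n) \le 5(2^k + r)$ and substitute each of the four cases in turn. In each case the inequality becomes linear in $r$ with a single sign to verify over the stated range: in the first case $f = 3 \cdot 2^{k-1}$ it is immediate, in the second and fourth it follows from the lower bounds $r \ge 2^{k-2}$ and $r \ge 3 \cdot 2^{k-2}$ with strict slack, and in the third case $f = 6 \cdot 2^{k-1} - r$ it reduces to a bound that is saturated precisely at the left endpoint $r = 2^{k-1}$. This pinpoints equality: at $n = 3 \cdot 2^{k-1}$ (so $r = 2^{k-1}$ in the third case) one computes $f(n) = 5 \cdot 2^{k-1} = 5n/3$, and since these $n$ occur for every $k \ge 1$, the upper bound is tight infinitely often.

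For the lower bound I would similarly rewrite $\pp_{\bf pd}(n) \ge (6n+6)/5$ as $5 f(n) \ge 6(2^k + r) + 6$ and check the four cases. Here the binding case is the first, $f = 3 \cdot 2^{k-1}$ with $0 \le r < 2^{k-2}$, where the inequality becomes $2^{k-1} \ge 2r + 2$; using $r \le 2^{k-2} - 1$ this holds, with equality exactly at $r = 2^{k-2} - 1$. Hence $5 f(n) = 6n + 6$ at $n = 5 \cdot 2^{k-2} - 1$ for every $k \ge 2$, giving tightness infinitely often; the other three cases yield strict slack by the same kind of estimate, using $r \le 2^{k-1}-1$, $r \le 3 \cdot 2^{k-2} - 1$, and $r \le 2^k - 1$ respectively.

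The computations are routine once the formula is in hand, so the only real care is at the boundaries. I would double-check that the claimed extremal $n$ genuinely lie in the intervals of the cases that produce them, confirm that the thresholds $2^{k-2}$ etc. behave correctly for the smallest admissible $k$, and verify the small values $n = 2, 3$ directly to justify the hypotheses $n \ge 2$ (upper) and $n \ge 3$ (lower); indeed the lower bound already fails at $n = 2$, where $f(2) = 3 < 18/5$, which is exactly why that hypothesis is needed. I expect this boundary bookkeeping, rather than any of the inequalities themselves, to be the main (if modest) obstacle.
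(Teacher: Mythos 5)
Your proof is correct, but it takes a genuinely different route from the paper. The paper's own proof of this corollary is almost entirely mechanical: it feeds the closed form $f(n)$ (encoded as the {\tt Walnut} predicate {\tt guess\_pdpp}) to two {\tt Walnut} queries, {\tt lowbnd} and {\tt upbnd}, whose {\tt TRUE} outputs certify the two inequalities, and it obtains tightness by building similar formulas and examining the resulting automata, which yield the exact characterizations $\pp_{\bf pd}(n)=5n/3$ if and only if $n=3\cdot 2^k$, and $\pp_{\bf pd}(n)=(6n+6)/5$ if and only if $n=5\cdot 2^k-1$, for $k\geq 0$. You instead carry out the piecewise-linear algebra by hand, and your computations check out: in all eight case-inequalities the binding cases are exactly as you say (case three of $f$ at the left endpoint $r=2^{k-1}$ for the upper bound, case one at $r=2^{k-2}-1$ for the lower bound), your equality loci $n=3\cdot 2^{k-1}$ for $k\geq 1$ and $n=5\cdot 2^{k-2}-1$ for $k\geq 2$ coincide precisely with the paper's $n=3\cdot 2^k$ and $n=5\cdot 2^k-1$ for $k\geq 0$, and since you establish strict slack in the non-binding cases you in fact recover the paper's ``if and only if'' characterization rather than mere tightness. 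What each approach buys: the automated route is immune to the small-parameter bookkeeping that you rightly flag as the one delicate point --- for $k\in\{0,1\}$ the thresholds $2^{k-2}$ and $3\cdot 2^{k-2}$ are not integers, and $n=2$ lands in case one where the lower-bound inequality $2^{k-1}\geq 2r+2$ genuinely fails, matching the hypothesis $n\geq 3$ exactly as you observe --- whereas your hand derivation is self-contained, verifiable without software, and explains \emph{why} the constants $5/3$ and $6/5$ arise and where equality lives, which the automaton-based proof leaves opaque. In the final write-up, do handle $k\leq 1$ explicitly (a direct check of $n=2,3$ suffices, since for $k\geq 2$ all four thresholds are integral), as your closing paragraph anticipates.
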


\begin{proof}
We can check this with the following {\tt Walnut} code:
\begin{verbatim}
eval lowbnd "An,z (n>=2 & $guess_pdpp(n,z)) => 3*z<=5*n":
eval upbnd "An,z (n>=3 & $guess_pdpp(n,z)) => 5*z>=6*n+6":
\end{verbatim}
and both return {\tt TRUE}.  

Furthermore, it is easy to deduce from similar formulas and examining
the resulting automata that $\pp_{\bf pd} (n) = 5n/3$ if and only
if $n = 3 \cdot 2^k$ for $k \geq 0$, and $\pp_{\bf pd}(n) = (6n+6)/5$ if
and only if $n = 5\cdot 2^k - 1$ for $k \geq 0$.
\end{proof}

In the sections that follow, we report on the same kinds of computations, but
leave many of the details to the reader.

\subsection{The Thue--Morse word}

Recall that the Thue--Morse word
${\bf t} = 0110100110010110 \cdots$
is the fixed point starting with $0$ of the morphism
$0 \rightarrow 01$, $1 \rightarrow 10$.

We use the following {\tt Walnut} code:
\begin{verbatim}
def tmper "At (t>=i & t+p<i+n) => T[t]=T[t+p]":
# 26 states
def tmpal "At,u (t>=i & t<i+n & t+u+1=2*i+n) => T[t]=T[u]":
# 15 states
def tm2pal "Em m<=n & $tmpal(i,m) & $tmpal(i+m,n-m)":
# 61 states
def tmpp "Ep p>=1 & p<=n & n>=1 & $tmper(i,n,p) & $tm2pal(i,p)":
# 28 states
def tmfaceq "Au,v (u>=i & u<i+n & v+i=u+j) => T[u]=T[v]":
# 14 states
def tmnovel "Aj (j<i) => ~$tmfaceq(i,j,n)":
# 8 states
def counttmpp n "$tmpp(i,n) & $tmnovel(i,n)":
# 45 states
\end{verbatim}
This gives us a linear representation 
of rank $45$, too big to present here.
Using this linear representation, we can prove an exact formula
for $\pp_{\bf t}(n)$, as follows:
\begin{theorem}
Let $n \geq 3$ and write $n = 2^k + r$, where $0 \leq r < 2^k$.
If $k$ is even, then 
$$ \pp_{\bf t}(n) = \begin{cases}
	2^{k+1} + 2-2r, & \text{if $0 \leq r < 2^{k-2}$;} \\
	3\cdot 2^k + 2-2r, & \text{if $2^{k-2} \leq r \leq 2^{k-1}$;} \\
	2^{k+2} + 4-4r, & \text{if $2^{k-1} < r < 3\cdot 2^{k-2}$;} \\
	10\cdot2^{k-1} + 4-4r, & \text{if $3\cdot 2^{k-2} \leq r < 2^k$}.
	\end{cases} $$
If $k$ is odd, then
$$ \pp_{\bf t}(n) = \begin{cases}
        3\cdot 2^{k-1} + 2-2r, & \text{if $0 \leq r < 2^{k-3}$;} \\
        2^{k+1} + 2-2r, & \text{if $2^{k-3} \leq r < 2^{k-1}$;} \\
        2^{k+2}-2 , & \text{if $r = 2^{k-1}$}; \\
        3 \cdot 2^{k+1} + 4-4r,& \text{if $2^{k-1} \leq r < 2^k$}.
        \end{cases} $$
\end{theorem}

\begin{proof}
We can carry out the proof using exactly the same ideas as for the period-doubling proof.  The details are omitted.
\end{proof}

\begin{corollary}
We have $\pp_{\bf t} (n) \geq (n+17)/2$ for $n \geq 12$ and
$\pp_{\bf t} (n) \leq (8n-6)/3$ for $n \geq 6$.   Furthermore, these
bounds are sharp, in the sense that the bounds are achieved for
infinitely many $n$.   The lower bound is achieved for $n = 2\cdot4^k - 1$, $k \geq 1$,
and the upper bound is achieved for 
$n = 3 \cdot 4^k$, $k \geq 0$.
\end{corollary}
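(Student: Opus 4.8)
The plan is to read off the corollary directly from the exact formula for $\pp_{\bf t}(n)$ proved in the preceding theorem, just as the period-doubling corollary was read off from its exact formula. The most economical route mirrors the {\tt Walnut} verification used there: encode the piecewise formula as a predicate {\tt guess\_tmpp}$(n,z)$ that accepts exactly when $z=\pp_{\bf t}(n)$, and then check the two inequalities with
\begin{verbatim}
eval tmlowbnd "An,z (n>=12 & $guess_tmpp(n,z)) => 2*z>=n+17":
eval tmupbnd "An,z (n>=6 & $guess_tmpp(n,z)) => 3*z<=8*n-6":
\end{verbatim}
both of which I expect to return {\tt TRUE}. This reduces the whole claim to the correctness of {\tt guess\_tmpp}, which is guaranteed by the exact formula.

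A purely hand-written verification is also straightforward and explains \emph{why} the bounds hold. On each dyadic block $2^k \le n < 2^{k+1}$ the formula expresses $\pp_{\bf t}(n)$ as a piecewise-affine function of $r=n-2^k$, and the two target bounds $(n+17)/2$ and $(8n-6)/3$ are themselves affine in $n$. Hence the gaps $2\pp_{\bf t}(n)-(n+17)$ and $3\pp_{\bf t}(n)-(8n-6)$ are affine on each of the finitely many pieces, so their signs are determined by their values at the endpoints of the pieces. Writing out these endpoint values (separately for $k$ even and $k$ odd) reduces both inequalities to a finite list of elementary inequalities in the single quantity $2^k$, which one checks directly.

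For sharpness I would locate the equality cases by substitution. For the lower bound, set $n=2\cdot4^m-1=2^{2m+1}-1$; in the notation of the theorem $k=2m$ is even and $r=2^{2m}-1$ lands in the last subcase, giving $\pp_{\bf t}(n)=10\cdot 2^{k-1}+4-4r=2^{2m}+8=(n+17)/2$. For the upper bound, set $n=3\cdot4^m=2^{2m+1}+2^{2m}$; then $k=2m+1$ is odd and $r=2^{2m}=2^{k-1}$ lands in the subcase $r=2^{k-1}$, giving $\pp_{\bf t}(n)=2^{k+2}-2=8\cdot 2^{2m}-2=(8n-6)/3$. These confirm that each bound is attained for infinitely many $n$, at precisely the stated values.

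The only real difficulty is the bookkeeping: $\pp_{\bf t}(n)$ oscillates, with pieces of differing slope and different behavior for even and odd $k$, so one must evaluate each gap at the correct endpoint of the correct piece to be sure the extreme values occur where claimed and nowhere better. In particular I would verify that odd-$k$ blocks never meet the lower bound and even-$k$ blocks never meet the upper bound, which is what isolates the sharp points as exactly $n=2\cdot4^m-1$ and $n=3\cdot4^m$. The {\tt Walnut} route bypasses this casework for the inequalities themselves, but a short additional (and equally automatable) computation is still needed to pin down the equality cases.
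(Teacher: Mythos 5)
Your proposal is correct and matches the paper's intended approach: the paper leaves this corollary's details to the reader, explicitly saying the same kinds of computations as in the period-doubling section apply, and your {\tt guess\_tmpp}-plus-{\tt Walnut} verification is precisely that method. Your sharpness substitutions also check out ($k=2m$ even with $r=2^{2m}-1$ in the last even subcase giving $2^{2m}+8=(n+17)/2$, and $k=2m+1$ odd with $r=2^{k-1}$ giving $2^{k+2}-2=(8n-6)/3$), correctly using the dedicated $r=2^{k-1}$ case of the odd-$k$ formula.
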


\subsection{The Rudin--Shapiro word}

Recall that the Rudin--Shapiro word \cite{Rudin:1959,Shapiro:1952} 
counts the number of $11$'s occurring in the base-$2$ representation of $n$,
taken modulo $2$.

\begin{theorem}
For $n \geq 25$ there are no length-$n$ factors of the Rudin--Shapiro word that are palindromic periodicities.  The bound $25$ is optimal, as witnessed by the length-$24$ factor $011110110111100010000100$.
\end{theorem}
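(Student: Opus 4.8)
The plan is to follow exactly the \texttt{Walnut}-based strategy already used for the period-doubling and Thue--Morse words. The Rudin--Shapiro word is $2$-automatic (its underlying automaton, built in to \texttt{Walnut}, has four states), so the generic automaton guaranteed by the theorem on generalized automatic words exists. Concretely, I would transcribe the generic predicates $\per$, $\pal$, $\paltwo$, $\ispp$ into their Rudin--Shapiro instances \texttt{rsper}, \texttt{rspal}, \texttt{rs2pal}, \texttt{rspp}, just as was done for \texttt{pdper}, \texttt{pdpal}, \texttt{pd2pal}, \texttt{pdpp}. The resulting synchronized automaton \texttt{rspp} accepts, in parallel base-$2$, precisely those pairs $(i,n)$ for which the factor ${\bf rs}[i..i+n-1]$ is a palindromic periodicity.

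The heart of the argument is then a single emptiness test. Since \texttt{rspp} defines a regular set of pairs and $\{n \ge 25\}$ is regular, their intersection is regular and its emptiness is decidable. I would evaluate
\[
\forall i,n \ (n \geq 25) \implies \neg\, \ispp(i,n)
\]
(in the \texttt{rspp} instantiation) and expect it to return \texttt{TRUE}, which establishes that no factor of length at least $25$ is a palindromic periodicity. Heuristically, this finiteness is exactly what one should expect: the Rudin--Shapiro word has only boundedly long palindromic factors and finite critical exponent, so a symmetric word-period cannot be arbitrarily long and a bounded period cannot extend to an arbitrarily long factor; but the rigorous statement is the automaton emptiness check above.

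For optimality I would exhibit the length-$24$ witness directly rather than rely only on the machine. Writing $w = 011110110111100010000100$, one checks that $w = ps$ with the palindromes $p = 01111011011110$ and $s = 0010000100$; hence $w$ is symmetric, and therefore a palindromic periodicity with $|w| = |ps| = 24$. It remains only to confirm that $w$ is a genuine factor of the Rudin--Shapiro word, which I would verify in \texttt{Walnut} by checking that $\ispp(i,24)$ is satisfiable (equivalently, by locating an occurrence of $w$). Combined with the previous step, this shows that $25$ is the least threshold for which the statement holds, so the bound is optimal.

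The main obstacle I anticipate is computational rather than conceptual: the two-palindrome automaton \texttt{rs2pal} is typically the largest object in this pipeline (for the Thue--Morse word the analogous \texttt{tm2pal} already had $61$ states), and care is needed so that the four-state Rudin--Shapiro automaton is wired correctly and \texttt{rspp} is faithful to the definition. Once \texttt{rspp} is correctly constructed, both the universal finiteness claim and the nonemptiness at length $24$ reduce to routine emptiness/nonemptiness checks that \texttt{Walnut} performs automatically.
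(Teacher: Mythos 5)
Your proposal is correct and follows essentially the same route as the paper: the paper defines exactly the predicates \texttt{rsper}, \texttt{rspal}, \texttt{rs2pal}, \texttt{rspp} in \texttt{Walnut} and runs the equivalent emptiness check \verb|~Ei,n $rspp(i,n) & n>=25|, which returns \texttt{TRUE}. Your additional hand-verification of the witness $w=ps$ with $p = 01111011011110$ and $s=0010000100$ is a sound (and checkable) way to certify optimality, matching the theorem's stated length-$24$ factor.
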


\begin{proof}
Easily proved with {\tt Walnut}, as follows:
\begin{verbatim}
def rsper "At (t>=i & t+p<i+n) => RS[t]=RS[t+p]":
def rspal "At,u (t>=i & t<i+n & t+u+1=2*i+n) => RS[t]=RS[u]":
def rs2pal "Em m<=n & $rspal(i,m) & $rspal(i+m,n-m)":
def rspp "Ep p>=1 & p<=n & n>=1 & $rsper(i,n,p) & $rs2pal(i,p)":
eval no25 "~Ei,n $rspp(i,n) & n>=25":
\end{verbatim}
\end{proof}

\subsection{The regular paperfolding word}

Recall that the regular paperfolding word
is defined by the limit of the sequence of words $p_0 = 0$ and
$p_{n+1} = p_n \, 0 \, \overline{p_n}^R$, where $\overline{w}$ denotes the binary complement of $w$.   See \cite{Davis&Knuth:1970}.

\begin{theorem}
For $n \geq 22$ there are no length-$n$ factors of the regular paperfolding word that are palindromic periodicities.  The bound $22$ is optimal, as witnessed by the length-$21$ factor $011000110111001001110$.
\end{theorem}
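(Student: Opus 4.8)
The plan is to follow verbatim the template used above for the Rudin--Shapiro word, since the regular paperfolding word is $2$-automatic and hence falls squarely within the decidability framework of Section~4. Walnut provides the paperfolding word as a built-in automatic sequence, which I will denote {\tt PF}. First I would instantiate the period, palindrome, product-of-two-palindromes, and palindromic-periodicity predicates $\per$, $\pal$, $\paltwo$, $\ispp$ for {\tt PF}, exactly mirroring the {\tt RS} code:
\begin{verbatim}
def pfper "At (t>=i & t+p<i+n) => PF[t]=PF[t+p]":
def pfpal "At,u (t>=i & t<i+n & t+u+1=2*i+n) => PF[t]=PF[u]":
def pf2pal "Em m<=n & $pfpal(i,m) & $pfpal(i+m,n-m)":
def pfpp "Ep p>=1 & p<=n & n>=1 & $pfper(i,n,p) & $pf2pal(i,p)":
\end{verbatim}
The resulting automaton for {\tt pfpp} reads the base-$2$ representations of $i$ and $n$ in parallel and recognizes exactly the pairs for which ${\bf pf}[i..i+n-1]$ is a palindromic periodicity.

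To establish the first assertion I would intersect this language with the condition $n \geq 22$ and test that the resulting automaton accepts nothing, via {\tt eval no22 "\textasciitilde Ei,n \$pfpp(i,n) \& n>=22":}, expecting the return value {\tt TRUE}. The conceptual point worth emphasizing is that emptiness of a single finite automaton settles the claim \emph{uniformly for all} $n$, however large: no separate argument is needed to rule out the reappearance of palindromic periodicities at larger lengths, and no a priori bound on $n$ must be imposed. For optimality I would confirm that a length-$21$ factor which is a palindromic periodicity does exist, by evaluating {\tt "Ei \$pfpp(i,21)"} and checking that it returns {\tt TRUE}. The displayed factor $011000110111001001110$ then serves as the explicit witness, and one can read off from the witnessing period $p$ produced by the decision procedure (or verify by hand) a pair of palindromes realizing it as a palindromic periodicity.

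The step I expect to be the main obstacle is the construction of the {\tt pf2pal} automaton. The $\paltwo$ predicate places an existential quantifier over the split point $m$ on top of the universal quantification hidden inside $\pal$, and the determinization that follows projecting out $m$ is where the state count tends to blow up; indeed, the analogous automata for the period-doubling and Thue--Morse words already grow to $27$ and $61$ states respectively. Since the palindromes occurring in the paperfolding word are comparatively constrained, I would expect {\tt pf2pal} to remain of comparable or smaller size, so that the construction terminates without difficulty. Once {\tt pf2pal} is in hand, the remaining steps — forming {\tt pfpp}, intersecting with $n \geq 22$, and testing emptiness — are routine projections and a decidable emptiness check, all fully automatic in Walnut, so the only real risk is a transient state-space explosion during that one determinization.
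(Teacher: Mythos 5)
Your proposal is correct and is essentially identical to the paper's proof, which simply says the result is ``easily proved just as for the Rudin--Shapiro word'' --- i.e., by instantiating the same $\per$, $\pal$, $\paltwo$, $\ispp$ predicates for the paperfolding word and checking with {\tt Walnut} that no factor of length $\geq 22$ is a palindromic periodicity. Your additional remarks (uniformity over all $n$ via automaton emptiness, the length-$21$ witness check, and the expected bottleneck in the product-of-two-palindromes construction) are all sound and consistent with the paper's methodology.
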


\begin{proof}
    Easily proved just as for the Rudin--Shapiro word.
\end{proof}

\subsection{The Tribonacci word}
\label{trib-sec}

Recall that the Tribonacci word is the fixed point of the morphism
$0 \rightarrow 01$, $1 \rightarrow 02$, $2 \rightarrow 0$.  See, e.g., \cite{Barcucci&Belanger&Brlek:2004}.

We use the following {\tt Walnut} code:
\begin{verbatim}
def tribper "?msd_trib At (t>=i & t+p<i+n) => TR[t]=TR[t+p]":
# 403 states
def tribpal "?msd_trib At,u (t>=i & t<i+n & t+u+1=2*i+n) => TR[t]=TR[u]":
# 78 states
def trib2pal "?msd_trib Em m<=n & $tribpal(i,m) & $tribpal(i+m,n-m)":
# 181 states
def tribpp "?msd_trib Ep p>=1 & p<=n & n>=1 & $tribper(i,n,p) & $trib2pal(i,p)":
# 40 states
def tribfaceq "?msd_trib Au,v (u>=i & u<i+n & v+i=u+j) => TR[u]=TR[v]":
# 26 states
def tribnovel "?msd_trib Aj (j<i) => ~$tribfaceq(i,j,n)":
# 22 states
def counttribpp n "?msd_trib $tribpp(i,n) & $tribnovel(i,n)":  
# 54 states
\end{verbatim}

Not all factors of $\bf tr$ are palindromic periodicities; e.g., $102$, but
many of them are.   For example, we can verify
that all prefixes are (see Theorem 4 above):
\begin{verbatim}
eval allprefixtrib "?msd_trib An (n>=1) => $tribpp(0,n)":
\end{verbatim}
and {\tt Walnut} returns {\tt TRUE}.

We now compute an explicit formula for the number of
length-$n$ factors of $\bf tr$ that are palindromic periodicities.
Define, as usual, the Tribonacci numbers
by $T_0 = 0$, $T_1 = 1$, $T_2 = 2$ and
$T_n = T_{n-1}+T_{n-2} + T_{n-3}$ for
$n \geq 3$.  We also set $T_i = 0$ for $i < 0$.

\begin{theorem}
Let $n \geq 0$ and $T_k \leq n < T_{k+1}$.   Then
$$
\pp_{\bf tr}(n) = 
\begin{cases}
    2n+1, & \text{if $n\leq (T_{k+1}-T_{k-1}-1)/2$;} \\
    2T_{k+1} + 2T_{k-1} - (2n+1), & \text{if $(T_{k+1}-T_{k-1}-1)/2 < n<T_k + T_{k-1}$}; \\
    T_{k+1} + T_{k-1}, & \text{otherwise.}
\end{cases}
$$
\label{tribb}
\end{theorem}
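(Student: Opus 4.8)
The plan is to follow exactly the template used above for the period-doubling word. From the automaton \texttt{counttribpp} we already obtain, via the regular-sequence machinery of the previous section, a Tribonacci-linear representation $(v,\mu,w)$ for the sequence $\pp_{\bf tr}(n)$. It therefore suffices to produce a second linear representation for the right-hand side $f(n)$ of the claimed formula and to verify, by the linear-representation equality test of \cite[Chap.~9]{Shallit:2023}, that the two representations compute the same function of $n$. Because $f$ is defined by a case split whose cases and thresholds are all expressed through the three consecutive Tribonacci numbers $T_{k-1},T_k,T_{k+1}$ bracketing $n$, the whole function is Tribonacci-automatic, and the needed representation can be built with a \texttt{count}-style command exactly as \texttt{count2} was built for $\bf pd$.

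Concretely, I would first introduce a predicate \texttt{istrib} recognizing Tribonacci numbers, namely the regular expression \texttt{0*10*} read in \texttt{msd\_trib}. Given $n$, the three numbers $a=T_{k-1}$, $b=T_k$, $c=T_{k+1}$ are then pinned down first-order-logically as: $a<b<c$ are all Tribonacci numbers, $b\le n<c$, and there is no Tribonacci number strictly between $a$ and $b$ nor between $b$ and $c$. With $a,b,c$ available I would encode the three cases, clearing all denominators so that only Presburger arithmetic over the Tribonacci system is used: the threshold $n\le (T_{k+1}-T_{k-1}-1)/2$ becomes $2n+1+a\le c$; the middle case is its negation together with $n<a+b$; and the last case is $a+b\le n$. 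Using the recurrence $c=b+a+T_{k-2}$ one checks that these three conditions are mutually exclusive and jointly exhaustive on $b\le n<c$, so they define $f(n)$ unambiguously, yielding a formula \texttt{guess\_tribpp}$(n,z)$ that holds iff $z=f(n)$, from which \texttt{count} produces the desired linear representation.

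The one delicate point is the small-$n$ boundary, where the bracketing breaks down because $T_{k-1}$ or $T_k$ equals $0$ and is not matched by \texttt{0*10*}. For these finitely many values ($n=0$ and $n=1$) I would add explicit disjuncts to \texttt{guess\_tribpp}, fixing their images to agree with the values actually returned by \texttt{counttribpp}, precisely as the period-doubling formula hard-coded $n=0$ and $n=1$. The main obstacle is thus not the verification step, which is a mechanical equality test between two linear representations, but the faithful first-order encoding of the piecewise definition: extracting the three consecutive Tribonacci numbers, rendering the halved threshold without division, and reconciling the small-$n$ exceptions so that \texttt{guess\_tribpp} matches \texttt{counttribpp} for every $n\ge 0$. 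Once that formula is correct, running the equality check in \texttt{Walnut} completes the proof.
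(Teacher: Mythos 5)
Your proposal is correct and follows essentially the same route as the paper, which (as announced after the period-doubling section) proves this theorem by exactly the template you describe: obtain a linear representation from \texttt{counttribpp}, encode the guessed piecewise formula as a \texttt{Walnut} predicate in \texttt{msd\_trib} (using a \texttt{0*10*}-style recognizer for Tribonacci numbers and denominator-cleared inequalities, just as \texttt{power2} and \texttt{guess\_pdpp} were used for $\bf pd$), and mechanically test equality of the two linear representations. Your attention to the small-$n$ boundary cases, handled by explicit disjuncts matching \texttt{counttribpp}, mirrors the hard-coded $f(0)$ and $f(1)$ in the period-doubling proof and is the right fix.
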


\begin{corollary}
We have $\pp_{\bf tr} (n) \leq 2n+1$ and
$\pp_{\bf tr} (n) > \alpha n$, where
$\alpha \doteq 1.0873780253841527$ is the
real zero of $X^3 + 2X^2 + 4X-8$.
\end{corollary}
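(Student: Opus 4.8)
The plan is to derive both inequalities directly from the closed formula of Theorem~\ref{tribb}, viewing $\pp_{\bf tr}(n)/n$ as a function that is piecewise monotone on each Tribonacci block $T_k \le n < T_{k+1}$.

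For the upper bound, the inequality $\pp_{\bf tr}(n) \le 2n+1$ is essentially immediate: $\pp_{\bf tr}(n)$ counts a subset of the distinct length-$n$ factors of $\bf tr$, and the Tribonacci word, being a standard episturmian word on three letters, has factor complexity exactly $2n+1$. Alternatively, one reads it off Theorem~\ref{tribb}: the first case equals $2n+1$, while one checks that the other two cases yield values at most $2n+1$; equality holds for the small values of $n$ in each block, so the bound is tight.

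For the lower bound, I would locate, within each block $T_k \le n < T_{k+1}$, the minimum of the ratio $\pp_{\bf tr}(n)/n$. On the piece where $\pp_{\bf tr}(n)=2n+1$ the ratio is $2+1/n>2$; on the decreasing piece the numerator shrinks while $n$ grows, so the ratio decreases; and on the final constant piece $\pp_{\bf tr}(n)=T_{k+1}+T_{k-1}$ the ratio again decreases. A short comparison of these three pieces shows that the block minimum is attained at the right end of the decreasing piece, namely $n=T_k+T_{k-1}-1$, where $\pp_{\bf tr}(n)=2(T_{k+1}-T_k)+1=2(T_{k-1}+T_{k-2})+1$, so that
$$ \frac{\pp_{\bf tr}(n)}{n} = R_k := \frac{2(T_{k+1}-T_k)+1}{T_k+T_{k-1}-1}. $$
The key computation is the limit of $R_k$: writing $T_j \sim C t^j$ with $t$ the Tribonacci constant (the dominant root of $X^3=X^2+X+1$), one gets $R_k \to 2t(t-1)/(t+1)$, and the relation $t^3=t^2+t+1$ simplifies this to $2/t$. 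Substituting $X=2/t$ into $X^3+2X^2+4X-8$ and clearing denominators reproduces exactly $t^3-t^2-t-1=0$, so $\alpha=2/t$ is precisely the real root of the stated cubic.

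It then remains to show that $R_k>\alpha$ for every $k$ and that this is the global infimum of $\pp_{\bf tr}(n)/n$. I expect this to be the main obstacle, since it requires handling the irrational constant $\alpha$ rather than a rational one. My plan is to clear $\alpha$ through its minimal polynomial and reduce $R_k>\alpha$ to a polynomial inequality among Tribonacci numbers, which can be established for all $k$ by induction (or by a finite automatic check on the family $n=T_k+T_{k-1}-1$). Since $R_k$ is strictly decreasing with limit $\alpha$, the infimum $\alpha$ is approached but never attained; combined with the fact that every other position $n$ gives a strictly larger ratio, this yields $\pp_{\bf tr}(n)>\alpha n$ for all $n\ge 1$, after verifying the finitely many small values directly.
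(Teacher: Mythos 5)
Your proposal is correct and follows essentially the same route as the paper's own (very terse) proof: read the per-block local minima of $\pp_{\bf tr}(n)/n$ off the piecewise formula of Theorem~\ref{tribb}, and let the asymptotics $T_k \sim C t^k$ (with $t$ the Tribonacci constant) produce the constant $\alpha$. You moreover supply the details the paper omits---the location of the minimum at $n = T_k+T_{k-1}-1$ with $\pp_{\bf tr}(n) = 2(T_{k+1}-T_k)+1$, the limit $2t(t-1)/(t+1) = 2/t = \alpha$, and the minimal-polynomial verification---and your $(n,\pp_{\bf tr}(n))$ pair is the consistent one, whereas the paper's sketch states these two quantities with their roles apparently interchanged (as written there the ratio at the minima would tend to $1/\alpha < 1$, which is impossible given the bound being proved).
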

\begin{proof}
For the lower bound, we see from 
Theorem~\ref{tribb} that the local minima
of $\pp_{\bf tr}(n)/n$ occur 
when $n = 2T_{k+1}-2T_k + 1$
and $\pp_{\bf tr}(n) = T_k + T_{k-1} - 1$.
The result now follows from the explicit
formula for $T_k$.   We omit the details.
\end{proof}

\section{Words with few palindromic periodicities}

When we count palindromic periodicities in this section, we
do not count the empty word.

As we have seen above, both the Rudin--Shapiro word and the regular paperfolding word have only finitely many palindromic periodicities as factors.   Indeed, using the linear representation for counting the number of palindromic periodicities of length $n$, we can show that the Rudin--Shapiro word has exactly $334$ palindromic periodicities, and the paperfolding word has exactly $255$ palindromic periodicities.   This suggests the question of finding words with the minimum possible number of palindromic periodicities.

We say a word is {\it aperiodic\/} if it is not ultimately periodic.

\begin{theorem}
Let $\Sigma = \{0,1,\ldots, k-1 \}$ be an alphabet of size
at least $3$.   
\begin{itemize}
    \item[(a)] No infinite word over $\Sigma$ has fewer than $6$ distinct palindromic periodicities.
    \item[(b)] The bound $6$ is optimal because $(012)^\omega$ has $6$.
    \item[(c)] No aperiodic infinite word over $\Sigma$ has $\leq 8$ palindromic periodicities.
    \item[(d)] The bound $8$ is optimal, because the image of $\bf f$ under the map $\tau: 0\rightarrow 0$, $1 \rightarrow 12$ has $9$ palindromic periodicities, namely
    $\{0,1,2,01,12,20,00,001,200\}$.
\end{itemize}
\end{theorem}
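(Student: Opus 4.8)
The plan is to build everything on three elementary facts about which short words are palindromic periodicities. First, every single letter is a palindrome, hence a palindromic periodicity. Second, every two-letter word $xy$ is symmetric (it is $x\cdot y$, a product of two one-letter palindromes), hence a palindromic periodicity. Third, a three-letter word $abc$ is a palindromic periodicity if and only if two of its letters coincide (i.e.\ $a=b$, $b=c$, or $a=c$): if $a=c$ it is a palindrome, if $a=b$ it is $(aa)c$ and if $b=c$ it is $a(cc)$, while a \emph{rainbow} $abc$ with $a,b,c$ pairwise distinct is neither a palindrome nor a product of two palindromes and cannot acquire a symmetric word-period of length $1$ or $2$. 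Writing $\rho(m)$ for the number of length-$m$ factors, the first two facts already give that the number of palindromic periodicities is at least $\rho(1)+\rho(2)$, and the third lets me add a controlled length-$3$ contribution. Throughout I use that $\rho$ is nondecreasing, and Morse--Hedlund: the word is aperiodic iff $\rho(m+1)>\rho(m)$ for all $m$.

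For part (a), if $\rho(1)\ge 3$ then the count is at least $\rho(1)+\rho(2)\ge 2\rho(1)\ge 6$. If $\rho(1)=1$ the word is $a^\omega$ with infinitely many palindromic periodicities. If $\rho(1)=2$ the word is binary, where \emph{every} factor of length $\le 3$ is a palindromic periodicity (there are no rainbows); an aperiodic binary word gives $\rho(1)+\rho(2)+\rho(3)\ge 2+3+4=9$, an (eventually) constant or alternating one gives infinitely many, and any remaining eventually periodic one has $\rho(2)\ge 3$, hence $\rho(3)\ge 3$ and a count $\ge 8$. For part (b), $(012)^\omega$ contributes the three letters and the three length-$2$ factors $01,12,20$. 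No factor of length $\ge 3$ is a palindromic periodicity: such a factor would have a symmetric word-period $z$ that is itself a factor, and $|z|\le 2$ is impossible (the word has minimal period $3$, so no factor of length $\ge 3$ has period $1$ or $2$) while $|z|\ge 3$ would make $z$ a symmetric factor of length $\ge 3$, impossible since a symmetric factor is a conjugate of its reversal and the length-$2$ factors of $(012)^\omega$ and of $(210)^\omega$ lie in the disjoint sets $\{01,12,20\}$ and $\{10,21,02\}$. Hence $(012)^\omega$ has exactly $6$.

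Part (c) is the substantial one: every aperiodic word over an alphabet of size $\ge 3$ has at least $9$ palindromic periodicities. By aperiodicity, $\rho(2)\ge\rho(1)+1$. If $\rho(1)\ge 4$ then $\rho(1)+\rho(2)\ge 4+5=9$; if the word is binary then $\rho(1)+\rho(2)+\rho(3)\ge 2+3+4=9$ as above. This leaves words using exactly three letters, and passing to a suffix (still aperiodic, with palindromic periodicities among those of the whole word) I may assume all letters occur infinitely often, whence $\rho(1)=3$ and $\rho(2)\ge 4$. By the third fact the count is at least $3+\rho(2)+N$, where $N$ is the number of \emph{non-rainbow} length-$3$ factors. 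A rainbow-only word is forced to be $(012)^\omega$ up to relabelling, hence periodic, so $N\ge 1$; if moreover $\rho(2)\ge 5$ we get $3+5+1=9$. The crux, and the main obstacle, is the case $\rho(1)=3$, $\rho(2)=4$, where I must show $N\ge 2$. Here I would analyse the order-$1$ Rauzy graph on $\{0,1,2\}$ with its four edges: recurrence and aperiodicity force out- and in-degrees $(2,1,1)$, with both out-edges of the unique right-special letter used infinitely often. If some square $xx$ occurs, a maximal $x$-run of length $\ge 2$ with non-$x$ neighbours yields two distinct non-rainbow factors $d\,x\,x$ and $x\,x\,c$. If no square occurs, every non-rainbow length-$3$ factor is a palindrome $xyx$, and starting from one such (which exists since $N\ge 1$) the out-degree-one constraints on the non-special letters force a second palindrome—either the echo $yxy$ or a branch palindrome $xzx$ through the other out-edge of the right-special letter. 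A short finite check on where the fourth edge points gives $N\ge 2$ in every case, hence $3+4+2=9$.

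For part (d) I would take $\mathbf w=\tau(\mathbf f)=01200120120012\cdots$, aperiodic since $\mathbf f$ is and $\tau$ is injective and non-erasing. Its factors of length $\le 3$ are the letters $0,1,2$; the length-$2$ factors $\{00,01,12,20\}$, all palindromic periodicities; and the length-$3$ factors $\{001,012,120,200,201\}$, of which exactly $001=(00)\,1$ and $200=2\,(00)$ are palindromic periodicities—giving the nine listed words. The key point is that in $\mathbf w$ the letter $1$ is always preceded by $0$ and followed by $2$, and the letter $2$ is always preceded by $1$ and followed by $0$ (since $\tau(1)=12$ and, in $\mathbf f$, every $1$ is preceded and followed by $0$). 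From these rigid contexts one checks there is no palindromic factor of length $3$ (no $x1x$ or $x2x$, and no $x0x$ with $x\neq 0$, while $000$ is not a factor) and none of length $4$ (the only candidate $a00a$ is not a factor); since a palindrome of length $\ge 5$ contains, after removing its equal end letters, a shorter palindrome of length $3$ or $4$, the palindromic factors of $\mathbf w$ are exactly $\{0,1,2,00\}$. Consequently every symmetric factor is a product of two palindromes each of length $\le 2$, so has length $\le 4$, and the only ones of length $\ge 3$ are $001$ and $200$. Finally, any palindromic periodicity of length $\ge 4$ would need a symmetric word-period $z$ that is a factor with $|z|\ge 4$ (the word-periods $001,200$ do not extend to a length-$4$ prefix of their powers, and a word-period of length $\le 2$ would force a period $\le 2$, which is absent); since there is no symmetric factor of length $\ge 4$, no such palindromic periodicity exists. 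Thus $\mathbf w$ has exactly the nine palindromic periodicities listed, proving that the bound $8$ in (c) is optimal.
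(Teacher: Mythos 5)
Your proposal is correct, but it takes a genuinely different route from the paper, whose proof of (a) and (c) is computer-assisted: the authors run a breadth-first search showing the longest word with at most $5$ palindromic periodicities is $00000$, and for (c) they classify all sufficiently long words with at most $8$ palindromic periodicities into four explicit parametrized families such as $0(012)^i\{\epsilon,0,01\}$, verifying the classification mechanically for lengths $9$ through $12$ and extending it by an induction on a shortest counterexample; part (d) is done entirely in {\tt Walnut}, via a Fibonacci-synchronized automaton for $\tau({\bf f})$, checking that no palindromic periodicity has length $>3$ and that no fourth power occurs. You instead give a fully human-checkable argument built on the classification of short palindromic periodicities (every word of length $\le 2$; a length-$3$ word exactly when it is not a rainbow), Morse--Hedlund complexity bounds to reduce (a) and (c) to the critical case $\rho(1)=3$, $\rho(2)=4$, a Rauzy-graph analysis to extract two non-rainbow length-$3$ factors there, and, for (d), a direct determination that the palindromic factors of $\tau({\bf f})$ are exactly $0,1,2,00$, so that symmetric factors have length $\le 3$ and no palindromic periodicity of length $\ge 4$ can exist. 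I checked your sketched crux: after normalizing, the loopless Rauzy graphs with degree sequence $(2,1,1)$ are $\{ab,ac,ba,ca\}$, $\{ab,ac,ba,cb\}$, $\{ab,ac,bc,ca\}$, and $\{ab,ac,bc,cb\}$; the last is excluded by in-degree at $a$, and each of the others forces two distinct non-rainbow factors ($aba$ and $aca$, or an echo pair such as $aba$ and $bab$), while the loop case yields $xxc$ and $exx$ with $c,e\ne x$, so your claimed $N\ge 2$ holds in every case. Your approach buys insight, generality, and independence from software; the paper's buys an exhaustive certified classification (it identifies all near-extremal words, not merely the bounds). Three small points should be made explicit in a final write-up: pass to a suffix in which every length-$\le 3$ factor that occurs does so infinitely often (possible because there are only finitely many such factors), which is what legitimizes your use of recurrence and of left neighbours in the square case; in (b), note that conjugating $z^R$ introduces at most the single wrap-around pair, which cannot account for the $|z|-1\ge 2$ pairs of $z$ lying in $\{01,12,20\}$; and in (d), derive aperiodicity of $\tau({\bf f})$ from the local invertibility of $\tau$ (replace each occurrence of $12$ by $1$ to recover ${\bf f}$, so eventual periodicity would transfer) rather than from injectivity of the morphism alone, which is not by itself a sufficient reason.
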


The word $\tau({\bf f})$ was previously studied by
the first author  and Luca Zamboni \cite{Fici&Zamboni:2013}.

\begin{proof}
\leavevmode
\begin{itemize}
    \item[(a)]  Without loss of generality we may assume that the first occurrence of a letter $i$ precedes the first occurrence of $j$ for all $i<j$.  Then breadth-first search, where we allow the alphabet to grow with the size of the word, demonstrates that the longest such word with $\leq 5$ palindromic periodicities is of length $5$, namely $00000$.

    \item[(b)] It is easy to see the only palindromic periodicities of $(012)^\omega$ are
    $0, 1, 2, 01, 12, 20$.

    \item[(c)] Again, without loss of generality, we assume that the first occurrence of a letter $i$ precedes the first occurrence of $j$ for all $i<j$.   With this assumption, we claim that
    if $w$ is of length at least $9$ and has $\leq 8$ palindromic periodicities, then
    $w$ is of one of the following forms:
    \begin{center}
    $0 (012)^i \{ \epsilon, 0, 01 \}$,\\
    $(012)^i \{ \epsilon, 0, 2, 3, 00, 01, 03, 011, 013 \}$,\\
    $(0123)^i \{ \epsilon, 0, 01, 012 \}$,\\
    $0 (123)^i \{ \epsilon, 1, 12 \}$.
    \end{center}

 The claim can be easily verified for $9 \leq |w| \leq 12$.   It is easy to check that all words with the stated property of length $12$ can be written uniquely in the form $x y^i z$ with $i \geq 2$
    $x \in \{ \epsilon, 0 \}$, $y \in \{ 012, 123,0123 \}$,
    and $|z| \leq 3$.   Let $w$ be a shortest counterexample
    to the claim with $|w| > 12$.  Let $x y^i z$ be the unique
    factorization of the prefix of $w$ of length $12$; then
    $i \geq 2$.   Write $w = x y^j z'$ with $j$ as large
    as possible.   It is now easy to check, by considering
    the possible suffixes $y^2 z'$ of $w$, that either $z'$
    begins with $y$ (so $j$ was not maximal, a contradiction),
    or $z'$ is one of the words given in the characterization,
    a contradiction.

    \item[(d)]  We use {\tt Walnut}.   Here it is possible to compute an automaton for $\tau({\bf f})$ because the
    number of occurrences of $0$ (resp., $1$) in a prefix of length $n$
    of $\bf f$ is synchronized; see \cite[Sect.~10.11]{Shallit:2023}.   The following
    {\tt Walnut} code checks that $\tau({\bf f})$ has no palindromic periodicities of length $>3$; it is then 
    easy to enumerate them by hand.   The code for the first five automata is taken from   \cite[Sect.~10.11]{Shallit:2023}.
\begin{verbatim}
reg shift {0,1} {0,1} "([0,0]|[0,1][1,1]*[1,0])*":
def phin "?msd_fib (s=0 & n=0) | Ex $shift(n-1,x) & s=x+1":
def noverphi "?msd_fib Et $phin(n,t) & s+n=t":
def fibpref0 "?msd_fib $noverphi(n+1,s)":
def fibpref1 "?msd_fib Eu $fibpref0(n,u) & n=s+u":

def img "?msd_fib Ew,x,y,z $fibpref0(q,x) & $fibpref1(q,y) &
   $fibpref0(q+1,w) & $fibpref1(q+1,z) & x+2*y<=n & w+2*z>n &
   r+x+2*y=n":
# img(n) = (q,r) means the n'th position of the image 
# tau(f) is the r'th letter of phi(f[q])

def img0 "?msd_fib Eq,r $img(n,q,r) & F[q]=@0":
def img1 "?msd_fib Eq,r $img(n,q,r) & F[q]=@1 & r=0":
def img2 "?msd_fib Eq,r $img(n,q,r) & F[q]=@1 & r=1":
combine NF img0=0 img1=1 img2=2:

def nfper "?msd_fib At (t>=i & t+p<i+n) => NF[t]=NF[t+p]":
def nfpal "?msd_fib At,u (t>=i & t<i+n & t+u+1=2*i+n) => NF[t]=NF[u]":
def nf2pal "?msd_fib Em m<=n & $nfpal(i,m) & $nfpal(i+m,n-m)":
def nfpp "?msd_fib Ep p>=1 & p<=n & n>=1 & $nfper(i,n,p) & $nf2pal(i,p)":
eval nfpp3 "?msd_fib Ai,n $nfpp(i,n) => n<=3":
\end{verbatim}

and {\tt Walnut} returns {\tt TRUE} for the last assertion.
To check that $\tau({\bf f})$ is aperiodic, we check that it
has no $4$th powers:
\begin{verbatim}
eval no4 "?msd_fib ~Ei,n,p n>=1 & p>=1 & $nfper(i,n,p) & n>=4*p":
\end{verbatim}
\end{itemize}
\end{proof}

The binary case is similar but a bit more complicated.

\begin{theorem}
Let $\Sigma = \{0,1 \}$.
\begin{itemize}
    \item[(a)] No infinite word over $\Sigma$ has fewer than $30$ distinct palindromic periodicities.
    \item[(b)] The bound $30$ is optimal because $(001011)^\omega$ has $30$.
    \item[(c)] No aperiodic infinite binary word has $\leq 43$ palindromic periodicities.
    \item[(d)] The bound $43$ is optimal, because the image of $\bf f$ under the map $\varphi: 0\rightarrow 0$, $1 \rightarrow 01101$ has $44$ palindromic periodicities.
\end{itemize}
\end{theorem}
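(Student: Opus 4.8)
The plan is to follow the four-part structure of the preceding ternary theorem, adapting each argument to the fixed binary alphabet $\Sigma=\{0,1\}$. For part (a), the crucial (and easy) observation is that being a palindromic periodicity is a property of a factor in isolation, so the set of palindromic-periodicity factors of any prefix of an infinite word $\bf x$ is contained in that of $\bf x$ itself. Hence it suffices to bound the length of finite binary words having at most $29$ distinct palindromic-periodicity factors. I would run a breadth-first search over $\{0,1\}^*$, pruning a branch as soon as its count of palindromic periodicities exceeds $29$, and record the maximal length $L$ reached. Since $L$ is finite, no infinite binary word can realize $\le 29$, which establishes the bound $30$. Part (b) is then a direct verification that $(001011)^\omega$ attains exactly $30$. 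The key preliminary fact is that $001011$ is primitive and \emph{not} symmetric (no conjugate of it equals its reverse, and symmetry is conjugacy-invariant). By the Fine--Wilf remark in Section~2, any palindromic-periodicity factor of $(001011)^\omega$ would then need a symmetric word-period longer than half its length; combining this with the global period $6$ forces that word-period to be a power of a conjugate of $001011$, which cannot be symmetric. This shows long factors are never palindromic periodicities, so there are only finitely many, and they can be listed by hand (or with a short {\tt Walnut}/linear-representation computation).

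Part (c) is the technical heart and, I expect, the main obstacle. Mirroring the ternary argument, the goal is to show that every binary word of length at least some explicit threshold $N$ with at most $43$ palindromic periodicities has the shape $x\,y^{i}\,z$ with $i\ge 2$, where $x$ and $z$ range over bounded finite sets of prefixes and suffixes, and $y$ ranges over a finite list of short primitive periods (notably $001011$ and its relatives, since that word already realizes $30<43$). I would first verify the characterization exhaustively by machine for all lengths up to roughly $2N$, then take a shortest counterexample $w$ of greater length, factor its length-$N$ prefix uniquely as $x\,y^{i}\,z$, write $w=x\,y^{j}z'$ with $j$ maximal, and argue by inspecting the possible suffixes $y^{2}z'$ that $z'$ either begins with another copy of $y$ (contradicting maximality) or is one of the admissible tails (contradicting that $w$ is a counterexample). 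Once the characterization is in hand, every long binary word with $\le 43$ palindromic periodicities is eventually periodic, so any \emph{aperiodic} word must contain at least $44$.

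The difficulty in part (c) is purely one of scale: the threshold $43$ admits many more periodic families, and far longer seeds $x$ and tails $z$, than the ternary bound $8$, so both the base-case enumeration and the suffix-by-suffix case analysis grow substantially and must be organized carefully, almost certainly with machine assistance, to guarantee completeness. Getting the finite list of periods $y$ correct and confirming that no admissible tail can be extended are where the real bookkeeping lies.

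Finally, for part (d) I would build an automaton for $\varphi({\bf f})$ exactly as in the ternary case. The number of $0$'s (respectively $1$'s) in a length-$n$ prefix of $\bf f$ is Fibonacci-synchronized, so a synchronized predicate analogous to {\tt img} locates each image position, and a {\tt combine} step produces the automatic word. Feeding this into the {\tt nfper}/{\tt nfpal}/{\tt nf2pal}/{\tt nfpp} machinery, I would check that $\varphi({\bf f})$ has no palindromic periodicity of length exceeding a small explicit bound, enumerate the $44$ short ones by hand, and confirm aperiodicity by verifying the absence of fourth powers with {\tt nfper}. This last part is routine given the infrastructure already developed in the paper; the genuine work is the combinatorial characterization of part (c).
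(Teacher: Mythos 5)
Your proposal matches the paper's proof essentially point for point: breadth-first search for (a) (the paper finds the longest words with $\le 29$ palindromic periodicities are $0^{29}$ and $1^{29}$), direct verification for (b), the $x\,y^i\,z$ structure theorem for long words with $\le 43$ palindromic periodicities via a shortest-counterexample argument with machine-checked base cases and $y$ a conjugate of a word from a finite list containing $001011$ and relatives (the paper's set $B$, with $|x|,|z|\le 5$) for (c), and the Fibonacci-synchronized {\tt Walnut} construction of $\varphi({\bf f})$, a check that it has no palindromic periodicity beyond a small length bound (the paper uses $9$), hand enumeration of the $44$, and aperiodicity via absence of fourth powers for (d). Your extra Fine--Wilf argument in (b) needs one small patch --- Fine--Wilf only applies when the factor's length is at least $|z|+6-\gcd(|z|,6)$, so a symmetric word-period $z$ nearly as long as the factor is not immediately forced to be a power of a conjugate of $001011$, and one must separately rule out long symmetric factors (e.g., by noting $z$ is itself a prefix-factor and checking directly) --- but since you also offer the paper's direct enumeration as a fallback, this does not affect correctness.
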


The word $\varphi({\bf f})$ was previously studied by
the first author and L.~Zamboni \cite{Fici&Zamboni:2013}.

\begin{proof}[Proof sketch.]
\leavevmode
\begin{itemize}
    \item[(a)] Breadth-first search shows that the longest binary words with $\leq 29$ palindromic periodicities are of length $29$; namely, $0^{29}$ and $1^{29}$.

    \item[(b)] It is easy to check that the only palindromic periodicities in $(001011)^\omega$ are
    \begin{align*}
        &0, 1, 00, 01, 10, 11, 001, 010, 011, 100, 101, 110, 0010, 0101, 0110, 1001, 1011, \\
        &1100, 00101, 01011, 01100,
       10010, 10110, 11001, 010110, 011001, 100101,\\
       &0110010, 1011001, 10010110.
    \end{align*}

    \item[(c)]   We claim that every sufficient large finite binary word having $\leq 43$ palindromic periodicities is of the form $x y^i z$ where $|x|, |z| \leq 5$ and $y$ is a conjugate of one of members of the set $B$,
    where 
    \begin{align*}
     B &= \{ 001011, 001101, 0001011, 0001101, 0010111, 0011101, 00001011, 00001101, \\
   & \quad \quad 00010111,  00011101, 00101011, 00101111, 00110101, 00111101\}.    
    \end{align*}
     The argument is similar as for the previous theorem.

    \item[(d)] We use {\tt Walnut}. Again it is possible to compute an automaton for $\varphi( {\bf f} )$.
The following {\tt Walnut} code checks that $\varphi({\bf f})$ has no palindromic periodicities of
length $> 9$; it is then easy to enumerate them by hand.
\begin{verbatim}
def img2 "?msd_fib Ew,x,y,z $fibpref0(q,x) & $fibpref1(q,y) & 
   $fibpref0(q+1,w) & $fibpref1(q+1,z) & x+5*y<=n & w+5*z>n & r+x+5*y=n":
# img2(n) = (q,r) means the n'th position of the image 
# phi(f) is the r'th letter of phi(f[q])

def img20 "?msd_fib Eq,r $img2(n,q,r) & ((F[q]=@0)|(F[q]=@1 & (r=0|r=3)))":
def img21 "?msd_fib Eq,r $img2(n,q,r) & F[q]=@1 & (r=1|r=2|r=4)":
combine QF img20=0 img21=1:
# 61 states

def qfper "?msd_fib At (t>=i & t+p<i+n) => QF[t]=QF[t+p]":
#17157 states
def qfpal "?msd_fib At,u (t>=i & t<i+n & t+u+1=2*i+n) => QF[t]=QF[u]":
#119 states, largest intermediate automaton was 58110620 states!
def qf2pal "?msd_fib Em m<=n & $qfpal(i,m) & $qfpal(i+m,n-m)":
#187 states
def qfpp "?msd_fib Ep p>=1 & p<=n & n>=1 & $qfper(i,n,p) & $qf2pal(i,p)":
#203 states

eval nqpp9 "?msd_fib Ai,t $qfpp(i,t) => t<=9":
\end{verbatim}

We can also check that $\varphi({\bf f})$ is aperiodic.
\begin{verbatim}
eval no42 "?msd_fib ~Ei,n,p n>=1 & p>=1 & $qfper(i,n,p) & n>=4*p":
\end{verbatim}
    
\end{itemize}
\end{proof}

\section{For further research}

Asymptotically, how many palindromic periodicities of length $n$ are there for binary words? 

The first few values of the sequence are: 
\begin{align*}
&2, 4, 8, 16, 32, 58, 108, 190, 336, 560, 948, 1574, 2568,\\
&4116, 6596, 10444, 16320, 25488, 39216, 60690, 92204.
\end{align*}
It is sequence~\seqnum{A374495} in the {\it On-Line Encyclopedia of Integer Sequences} \cite{Sloane:2024}.  More terms there have now been computed by  Michael S. Branicky.

Obviously, the growth rate of the language of palindromic periodicities is at least $\Omega(2^{n/2})$, since palindromes are particular cases of palindromic periodicities.


\end{document}